\newcommand{\modified}[1]{{\color{black}#1}}
\journal{Preprint submitted to arXiv}
\newtheorem{definition}{Definition}
\newtheorem{lemma}{Lemma}
\begin{document}

\begin{frontmatter}

\title{Mixed-Integer Linear Programming Approximations for the\\Stochastic Knapsack\tnoteref{mytitlenote}}
\tnotetext[mytitlenote]{This material is based upon works supported by the Science Foundation Ireland under Grant No. 12/RC/2289-P2 which is co-funded under the European Regional Development Fund.}

\author[rossi]{Roberto Rossi\corref{mycorrespondingauthor}}
\cortext[mycorrespondingauthor]{Corresponding author}
\ead{roberto.rossi@ed.ac.uk}

\author[steve]{Steven D. Prestwich}
\ead{s.prestwich@ucc.ie}

\author[tarim]{S. Armagan Tarim}
\ead{armagan.tarim@hacettepe.edu.tr}

\address[rossi]{Business School, University of Edinburgh, Edinburgh, UK}
\address[steve]{School of Computer Science \& IT, University College Cork, College Rd, Cork, Ireland}
\address[tarim]{Institute of Informatics, Hacettepe University, Ankara, Türkiye}

\begin{abstract}
 \modified{We develop mathematical programming approximations} to tackle the stochastic knapsack problem. 
In this problem, the decision maker considers items for which either weights or values, or both, are random. The aim is to select a subset of these items to be included into their knapsack. 
We study both static and dynamic variants of this problem: in the static setting, the decision about which items should be included in the knapsack is taken at the outset, before any random item value or weight is revealed; in the dynamic setting, items are received sequentially, and the decision about a particular item is made by taking into account previously observed values and weights. 
The knapsack has a given capacity, and if the total realised weight exceeds this capacity then a penalty cost is incurred for each unit of excess capacity utilised.
The goal is to maximise the expected net profit. 
We tackle the case of normally distributed item weights and we show that our approach extends to the case in which item weights are correlated and follow a multivariate normal distribution. In addition, we show our approach represents an effective heuristic for the case in which item weights follow generic probability distributions. In an extensive computational study we demonstrate that our models are near-optimal and more scalable than other state-of-the-art approaches.
\end{abstract}

\begin{keyword}
stochastic knapsack; static; dynamic; milp; correlation.
\end{keyword}

\end{frontmatter}


\section{Introduction}\label{introduction}

The knapsack problem was first discussed by \cite{05cf96e1-34b5-3cd9-b02c-2a84c069f688}; since Dantzig's pioneering work, the problem has been intensively studied because of its relevance to both the theory and practice of combinatorial optimisation \citep[see e.g.][]{martello1990knapsack,Cacchiani2022}.
In its original formulation, the decision maker considers several items with predetermined weights and values. Their goal is to select a subset of these items that maximises the knapsack total value, whilst ensuring the knapsack weight stays below a specified capacity. 
In practice, it is often the case that item weights and values are unknown at the time item insertion decisions are made. Stochastic knapsack problems were introduced to address the situation in which either item weights or values, or both, are random. Stochastic knapsack problems can be categorised into two classes: ``static'' and ``dynamic.'' In the static setting, the decision about which items should be included in the knapsack is taken at the outset, before any item value or weight is revealed. In the dynamic setting, items are received sequentially, and the decision about a particular item is taken by taking into account previously observed values and weights. When item weights and values are random, there are several possible alternative ways of modelling what happens if the knapsack capacity is exceeded. 
In this work, we follow the established framework discussed in works such as \citep{Kleywegt2002,Merzifonluolu2011,Fortz2012}: the knapsack features a given capacity, and if the total realised weight exceeds this capacity, a penalty cost is incurred for each unit of excess capacity utilised; moreover, a salvage value can be claimed for each unit of capacity that remains unused. The goal is to maximise the expected net profit of the knapsack. 

We make the following contribution to the literature on the stochastic knapsack problem:
\begin{itemize}
\item \modified{We develop mathematical programming approximations} to tackle the Static Stochastic Knapsack Problem (SSKP) under normally distributed item weights; \modified{our models  do not require any dedicated coding and can be implemented directly in standard mathematical programming packages without specialised callbacks or decomposition code.} To model expected capacity shortfall, we adopt a piecewise linearisation strategy and we derive explicit conditions on the numbers of segments that guarantee a given absolute optimality tolerance.
\item Our approach extends to the case in which item weights are correlated and follow a multivariate normal distribution; to the best of our knowledge, ours is the first quadratically constrained programming formulation for SSKP under multivariate normal item weights.
\item In an extensive computational study, we compare our SSKP heuristic against four state-of-the-art baselines from the literature \citep{Kleywegt2002, Merzifonluolu2011, Fortz2012, Range2018} and
we find that our approach is more scalable, whilst remaining near-optimal. In addition, we demonstrate empirically that our approach can be used to generate near-optimal solutions to instances in which item weights follow generic probability distributions.
\item Finally, we investigate a dynamic setting in which items are considered sequentially, and a decision on an item inclusion may depend on previous item weight realisations; we deploy our SSKP heuristic in a receding horizon control framework \citep{Kwon2005-fs}, and compare its performance against optimal solutions obtained via stochastic dynamic programming for uncorrelated and correlated item weights; for the latter case, we empirically estimate the cost of ignoring item correlation and show that this is non-negligible. 
\modified{Additionally, we provide a statistical optimality gap certificate that enables empirical assessment of solution quality even when stochastic dynamic programming is computationally intractable.}
\end{itemize}

\newpage

\modified{This work is organised as follows. In Section \ref{sec:literature} we survey related literature. In Section \ref{sec:sskp} we introduce the SSKP. In Section \ref{sec:piecewise} we provide relevant formal background on the piecewise linearisation of the first order loss function. In Section \ref{sec:milp_sskp}, we illustrate how linearisation strategies can be leveraged to solve the SSKP under weights that follow a (multivariate) normal distribution. In Section \ref{sec:dskp}, we introduce the Dynamic Stochastic Knapsack Problem (DSKP), which generalises the stochastic knapsack problem to a dynamic setting, and we deploy mathematical programming approaches developed for the SSKP in a receding horizon control framework to tackle the DSKP. In Section \ref{sec:experiments} we present our computational study. In Section \ref{sec:conclusions} we draw conclusions.}

\section{Literature review}\label{sec:literature}

The knapsack problem is one of the most active and investigated areas of research in combinatorial optimisation. A comprehensive survey of this area, presented in \citep{Cacchiani2022}, provides a valuable overview, which however does not cover stochastic knapsack problems. In this section, we aim to complement the former study and survey existing lines of research on the stochastic knapsack problem in both its static and dynamic variants. The goal is to position our contribution within this broad stream of literature.

The definition of the deterministic knapsack problem is well-understood in the literature. Conversely, there are multiple ways in which uncertainty may affect a knapsack problem. This has led to the development of several different variants of the so-called stochastic knapsack problem. 
As we have previously noted, stochastic knapsack problems can be classified as ``static'' or  ``dynamic''' on the basis of the timing of item insertion decisions and observations of random variables. This leads to two variants of the problem under scrutiny: the SSKP, and the DSKP.
We contribute to streams of literature encompassing both problems, which we survey in the following paragraphs.
 
Early works on the SSKP problem \citep[see e.g.][]{Morton1998} focused on the case in which the weight of each item and the knapsack capacity are deterministic, while item revenues are random with known distribution. One of the first works investigating the case in which item revenues and knapsack capacity are known and item weights are random is \citep{Kleywegt2002}, in which the authors utilised this variant of the SSKP to operationalise their Sample Average Approximation (SAA) framework. SAA was in fact directly inspired by \citep{Morton1998}, as discussed by \citeauthor{Kleywegt2002} in their introduction. The SSKP as formulated in \citep{Kleywegt2002} is the first occurrence in the literature of a model resembling the SSKP variant investigated in the present study. \cite{Aral2009} consider a subclass of \citeauthor{Kleywegt2002}'s SSKP model in which random item weights follow a Poisson distribution; for this setting, they discuss a polynomial-time solution for the continuous relaxation of this problem, and a customised branch-and-bound algorithm for the binary version of the problem. \cite{Kosuch2009} focus on the case in which weights are normally distributed random variables with known mean and variance, whilst capacity and item revenues remain deterministic; they investigate continuous relaxations solved using a stochastic gradient method to provide upper bounds in a branch-and-bound framework. \cite{Merzifonluolu2011} consider the same problem setting and provide an exact solution method as well as some structural results for a continuous relaxation of the problem; they then proceed and develop a customised branch-and-bound algorithm and a fast heuristic approach for obtaining an optimal binary solution. Moreover, while the knapsack capacity is considered deterministic in their study, \citeauthor{Merzifonluolu2011} show that problems with random capacity, as well as problems in which capacity is a decision variable subject to unit costs, fall within this class of problems. A recent work by the same authors extends this investigation by considering risk aversion \citep{Merzifonluoglu2021}. 
\cite{Fortz2012} discuss complexity results --- showing that the SSKP is weakly NP-hard --- and apply the LP/NLP algorithm of \cite{Quesada1992} to solve the problem. \cite{Chen2015} investigate unimodality and monotonicity properties of the expected knapsack return under the assumption that all random weight distribution have decreasing reversed hazard rate; under this assumption, they develop a heuristic search algorithm similar to simulated annealing. Finally, \cite{Range2018} consider a variant of the SSKP that penalises the expected knapsack overfilling and, in addition, enforces a chance constraint on the overfilling; by leveraging the central limit theorem, the problem is approximated as a shortest path problem with resource constraints and solved via dynamic programming. Our work contributes to this stream of literature by providing a new heuristic, based on mathematical programming, for tackling the SSKP as formulated in works such as \citep{Kleywegt2002,Merzifonluolu2011,Fortz2012} under normal weights. Our approach is computationally appealing and can be directly modelled and solved by off-the-shelf optimisation packages. 
 
Early works on the DSKP assume that objects arrive to and depart from the knapsack at random times \citep[see e.g.][]{Ross1989}; these works were  motivated by the problem of accepting and blocking calls to a circuit-switched telecommunication system which supports a variety of traffic types (e.g. voice, video). Items are generally assumed to arrive according to a Poisson process, and to feature a demand for a limited resource (the knapsack capacity) and an associated reward. Item resource requirements would become known at the time of the item’s arrival, and an item could be accepted or rejected. If accepted, the item would attract a reward; if rejected, it would incur a penalty. In some works \citep[see e.g.][]{Kleywegt1998,Chen2014} it is up to the decision maker to decide when to stop the problem and collect a terminal value --- in particular, \cite{Chen2014} assumes that an overflow event would result in a complete loss of the entire knapsack value. In others, the problem stops when an insertion is attempted with insufficient remaining capacity, with the final overflowing item not contributing to the knapsack value \citep[see e.g.][]{Dean2008,Blado2016,Blado2019,Blado2020}. All these problems substantially differ from the definition of DSKP we adopt in our work, which is the natural dynamic extension of the SSKP as discussed in works such as \citep{Kleywegt2002,Merzifonluolu2011}. In this extension, a finite stream of items is sequentially considered for inclusion, and a knapsack overfill incurs a per unit penalty cost. We introduce a novel approach, based on mathematical programming and receding horizon control \citep{Kwon2005-fs}, for tackling this DSKP variant under random weights following a generic distribution.

\newpage

We are not aware of any work in the literature tackling the SSKP or the DSKP under items featuring correlated, as opposed to independently distributed, random weights. Our models  extends --- for both the SSKP and the DSKP --- to the case of multivariate normally distributed weights. In the case of the DSKP, this leads to a stochastic model in which future item weights depend on observed weights of items previously considered. This is a class of problems that is seldom investigated in the literature due to its complexity. 

An interesting classification of stochastic knapsack problems can be drawn, following \cite{Lyu2022}, on the basis of three typical ways that the information is revealed to the decision maker. Accordingly, the SSKP may be categorised as ``anticipative:'' a problem in which the allocation sequence is determined before random variables are observed; the DSKP may be categorised as ``adaptive:'' a problem in which information on the actual weight of an item is revealed immediately after the decision to select this item; while the traditional knapsack problem may be categorised as ``responsive:'' a problem in which information on the actual weight of each item is revealed at the start of the allocation process, and in which the allocation policy is constructed after knowing all item weights. A further distinction can then be made within the adaptive class: problems in which weight forecasts for items not yet observed are updated on the basis of already observed item weights, and problems in which random item weights are independent; as we will show, our framework extends to the forecast update case, which to the best of our knowledge has not been considered before in the literature.

Finally, it should be noted that there are several other variants of the stochastic knapsack problem in which different modelling assumptions are made in relation to how uncertainty is accounted in the model; for instance, 
\cite{Goyal2010} investigated the chance-constrained knapsack problem, in which the goal is to select a set of items such that profit is maximised and the probability of the total knapsack size exceeding a bound is at most a given threshold. To keep our discussion compact, we have excluded cognate streams of literature in which the modelling assumptions fundamentally differ from ours. 

\section{The Static Stochastic Knapsack}\label{sec:sskp}

We introduce the SSKP by following the established problem setup discussed in the literature; our notation is summarised in Table \ref{tab:symbols}.

We consider a set of $n$ items, each of which features a random revenue $\pi_i$ per unit of weight and a random weight $\omega_i$. The capacity of the knapsack is $C$. If the total realised weight of selected items exceeds $C$, a shortage cost $\hat{c}$ is charged for each unit of excess capacity. If the total realised weight of selected items is below $C$, a salvage value $s<\hat{c}$ per unit of capacity remaining is received. The goal is to select a set of items that maximizes the expected total profit, which comprises the total value of selected items, plus expected salvage value, minus expected capacity shortage cost.

\begin{table}
\modified{
\centering
\begin{tabular}{ll}
$n$			&number of items\\
$i$			&item index\\
$\pi_i$		&item $i$ random revenue per unit of weight, $\bm{\pi}$ denotes the associated $n$-dimensional vector\\
$\omega_i$	&random weight of item $i$, $\bm{\omega}$ denotes the associated $n$-dimensional vector\\
$C$			&knapsack capacity\\
$c$			&shortage cost per unit of capacity\\
$s$			&salvage value per unit of capacity\\
$x_i$		&a binary decision variable set to one if item $i$ is included in the knapsack,\\
			&$\bm{x}$ denotes the associated $n$-dimensional vector\\
$\bm{\varphi}$	&$\bm{\varphi}\triangleq(\bm{\pi}-s\mathbf{1}_n)\circ\bm{\omega}$
\end{tabular}
\caption{Table of symbols}
\label{tab:symbols}
}
\end{table}

\newpage

\modified{
Let $x_i$ be a binary decision variable that is set to one if item $i$ is included in the knapsack; moreover, $\bm{x}'=(x_1,\ldots,x_n)$, $\bm{\pi}'=(\pi_1,\ldots,\pi_n)$, and $\bm{\omega}'=(\omega_1,\ldots,\omega_n)$, where $'$ denotes the vector transpose operator. The SSKP can be formulated as follow:
\begin{align*}
\text{maximize} \quad & \mbox{E}[(\bm{\pi}\circ\bm{\omega})']\bm{x} - \hat{c}\mbox{E}[\max(\bm{\omega}'\bm{x} - C,0)] + s\mbox{E}[\max(C-\bm{\omega}'\bm{x},0)] \\
\text{subject to:} \quad & 
	\bm{x} \in \{0,1\}^n,
\end{align*}
in which $\circ$ denotes the Hadamard product, and $\mbox{E}[\cdot]$ is the expected value operator.
We introduce the following well-known result from stochastic programming.
\begin{lemma}[see e.g. \cite{lawrencesnyder2011}, p. 664, C.37]\label{lemma:loss_complementary} 
Let $y$ be a scalar and $\xi$ a random variable,
\[\mbox{\em E}[\max(y-\xi,0)]=(y-\mbox{\em E}[\xi])+\mbox{\em E}[\max(\xi-y,0)].\]
\end{lemma}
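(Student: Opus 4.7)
The plan is to reduce this to a pointwise algebraic identity and then apply linearity of expectation. Specifically, I would first establish the scalar identity
\[
\max(a,0) - \max(-a,0) = a \qquad \text{for every } a \in \mathbb{R},
\]
which can be verified by splitting into the cases $a \geq 0$ (the first term is $a$, the second is $0$) and $a < 0$ (the first term is $0$, the second is $-a$).

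Next, I would substitute $a = y - \xi$ (so that $-a = \xi - y$) pointwise for each realisation of $\xi$, yielding
\[
\max(y-\xi,0) - \max(\xi-y,0) = y - \xi.
\]
Taking expectations on both sides and invoking linearity of $\mathrm{E}[\cdot]$ together with the fact that $y$ is a deterministic scalar gives
\[
\mathrm{E}[\max(y-\xi,0)] - \mathrm{E}[\max(\xi-y,0)] = y - \mathrm{E}[\xi],
\]
from which the claimed identity follows by rearrangement.

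There is essentially no obstacle here beyond a mild technical point: the manipulation requires that the expectations on both sides exist (i.e.\ $\xi$ is integrable, so that $\mathrm{E}[\max(y-\xi,0)]$ and $\mathrm{E}[\max(\xi-y,0)]$ are finite). This is implicit in the stochastic programming setting where $\xi$ has finite mean, and no further assumption on the distribution of $\xi$ is needed. Hence the proof is a one-line consequence of the pointwise identity and linearity of expectation.
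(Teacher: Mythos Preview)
Your proof is correct and is the standard argument. Note, however, that the paper does not actually supply a proof of this lemma: it is simply cited as a known identity from \cite{lawrencesnyder2011} (p.~664, C.37) and used as a black box, so there is nothing to compare against.
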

\noindent
We use the result in Lemma \ref{lemma:loss_complementary} to rewrite the SSKP objective function as follows. 

Let $\mathbf{1}'_n\triangleq(\underbrace{1,1,\ldots,1}_{n})$ denote a row vector of $n$ ones, we rewrite
\begin{align*}
\text{maximize} \quad & \mbox{E}[(\bm{\pi}-s\mathbf{1}_n)\circ\bm{\omega}]'\bm{x} - (\hat{c}-s)\mbox{E}[\max(\bm{\omega}'\bm{x} - C,0)] + sC \\
\text{subject to: } \quad & 
	\bm{x} \in \{0,1\}^n.
\end{align*}
We ignore the constant term $sC$, let $\bm{\varphi}\triangleq(\bm{\pi}-s\mathbf{1}_n)\circ\bm{\omega}$, and define $c\triangleq \hat{c}-s$, thus obtaining
\begin{align*}
\text{maximize} \quad &\mbox{E}[\bm{\varphi}']\bm{x} - c\mbox{E}[\max(\bm{\omega}'\bm{x} - C,0)] \\
\text{subject to:} \quad &
	\bm{x} \in \{0,1\}^n.
\end{align*}
In what follows we shall concentrate on this latter formulation. $\mbox{E}[\max(\bm{\omega}'\bm{x} - C,0)]$, which appears in the objective function, is the so-called ``first order loss function.'' In the next section, we provide relevant formal background on piecewise linearisation strategies that can be applied to the first order loss function.
}

\section{Formal background on piecewise linearisation of the first order loss function}\label{sec:piecewise}

\begin{definition}[First order loss function]
Let $\omega$ be a random variable; the first order loss function is defined as
\[\mathcal{L}(y,\omega)\triangleq \mbox{\em E}[\max(\omega - y,0)].\]
\end{definition}

$\mathcal{L}(y,\omega)$ is non-linear and cannot easily be embedded in mixed-integer linear programming (MILP) models. To overcome this issue, we leverage two well-known  inequalities from stochastic programming: Jensen's
and Edmundson-Madanski's inequalities \cite[see][p. 167--168]{citeulike:695971}, which are applicable because $\mathcal{L}(y,\omega)$ is convex \cite[see][Section C.3.1]{lawrencesnyder2011} in $y$ regardless of the distribution of $\omega$.

Let $g_\omega:\Omega \rightarrow \mathbb{R}_{\geq 0}$ be the probability density function of $\omega$.\footnote{Note that we illustrate the case of a continuous random variable $\omega$; extension to discrete random variables is straightforward.} Consider a partition of support $\Omega$ into $W$ disjoint intervals $\Omega_1,\ldots,\Omega_W$. We define, for all $i=1,\ldots,W$, 
\[p_i\triangleq\Pr(\omega \in \Omega_i)=\int_{\Omega_i} g_\omega(t)\mbox{d}t, \qquad \mbox{E}[\omega|\Omega_i]\triangleq p_i^{-1}\int_{\Omega_i} tg_\omega(t)\mbox{d}t.\] 

When applied to the first order loss function, the lower bound in \cite[][Section 8.2, Theorem 1]{Birge2011-oa}
\[\mathcal{L}_{\textup{lb}}(y,\omega)\triangleq \sum_{i=1}^W p_i\max(\mbox{\em E}[\omega|\Omega_i]-y,0)\leq \mathcal{L}(y,\omega)\]
is a piecewise linear function with $W+1$ segments \citep[][Lemma 10]{Rossi2014}; the 
$i$-th segment of $\mathcal{L}_{\textup{lb}}(y,\omega)$ is
\begin{equation}\label{eq:piecewise_lb}
\mathcal{L}^i_{\textup{lb}}(y,\omega)\triangleq \sum_{k=i}^W p_k\mbox{\em E}[\omega|\Omega_k]-y\sum_{k=i}^W p_k,~~~~~
\mbox{\em E}[\omega|\Omega_i]\leq y\leq \mbox{\em E}[\omega|\Omega_{i+1}],
\end{equation}
where $i=1,\ldots,W$; and the $W+1$ segment is $x=0$, for $\mbox{\em E}[\omega|\Omega_W]\leq y \leq \infty$. This lower bound is a direct application of Jensen's inequality.

Edmundson-Madanski's upper bound, $\mathcal{L}_{\textup{ub}}(y,\omega)\triangleq \mathcal{L}_{\textup{lb}}(y,\omega)+e_W\geq \mathcal{L}(y,\omega)$, is obtained by adding a value $e_W$, which denotes the maximum approximation error of the Jensen's bound; for any given $W\in\mathbb{N}$, $e_W$ can be easily obtained by checking the linearisation error at breakpoints of the piecewise linear approximation. 

This discussion assumes that a partition of the support $\Omega$ is given, but the quality of the approximation closely depends on the structure of this partitioning. 
\cite{citeulike:12820831} and \cite{Imamoto2008} discussed how to obtain an optimal partitioning of the support under a framework that minimises the maximum approximation error. In essence, one must find parameters ensuring that approximation errors at piecewise function breakpoints are all equal.  
These results can be directly applied to compute optimal linearisation parameters for the loss function of a standard normally distributed random variable. 
When the number of partitions increase, the bound converges to the true function, therefore the approximation error can in principle be made as small as necessary.

\section{MILP-based solution methods for the SSKP}\label{sec:milp_sskp}

\modified{In this section, we present our MILP reformulation and approximation framework to solve the SSKP under normally distributed item weights. }

\subsection{Normally distributed item weights}\label{sec:normal}

When $\omega$ is normally distributed, well-known standardisation techniques can be leveraged to ensure we only need to use a precompiled set of linearisation parameters for the standard normal random variable. The standardisation approach relies on the following lemma.
\begin{lemma}[see e.g. \cite{lawrencesnyder2011}, p. 664, C.31]\label{lemma:standardization}
Let $\zeta$ be a normal random variable with mean $\mu$ and standard deviation $\sigma$
\[\mathcal{L}(y,\zeta)=\sigma \mathcal{L}\left(\frac{y-\mu}{\sigma},Z\right),\]
where $Z$ is the standard normal random variable.
\end{lemma}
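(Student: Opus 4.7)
The plan is to prove this via direct manipulation of the definition of $\mathcal{L}(y,\zeta)$, exploiting the standard affine representation of a normal random variable in terms of $Z$. No piecewise machinery or inequalities are needed here; the result is a change-of-variables identity.

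First I would invoke the standard fact that any normal random variable $\zeta$ with mean $\mu$ and standard deviation $\sigma>0$ can be written as $\zeta = \mu + \sigma Z$, where $Z$ is standard normal. Substituting into the definition of the first order loss function yields
\[
\mathcal{L}(y,\zeta) = \mbox{E}[\max(\zeta - y, 0)] = \mbox{E}\left[\max\bigl(\sigma Z - (y-\mu),\,0\bigr)\right].
\]
Next, because $\sigma>0$, I can factor it out of the maximum, writing
\[
\max\bigl(\sigma Z - (y-\mu),\,0\bigr) = \sigma \max\!\left(Z - \tfrac{y-\mu}{\sigma},\,0\right),
\]
and then pull $\sigma$ outside the expectation by linearity. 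This immediately gives
\[
\mathcal{L}(y,\zeta) = \sigma\, \mbox{E}\!\left[\max\!\left(Z - \tfrac{y-\mu}{\sigma},\,0\right)\right] = \sigma\, \mathcal{L}\!\left(\tfrac{y-\mu}{\sigma},\,Z\right),
\]
which is the claimed identity.

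There is no real obstacle: the argument is three lines of algebra. The only subtlety worth flagging is the assumption $\sigma>0$, which is implicit in calling $\zeta$ a normal random variable with standard deviation $\sigma$ (the degenerate case $\sigma=0$ would collapse $\zeta$ to a constant and make the right-hand side ill-defined); since the lemma is used in Section~\ref{sec:normal} to standardize item weights with genuine uncertainty, this causes no issue. The only modelling convention being exploited is the location-scale structure of the normal family, which is why the result reduces the linearisation of $\mathcal{L}(\cdot,\zeta)$ across arbitrary $(\mu,\sigma)$ to precompiled parameters for $Z$ alone.
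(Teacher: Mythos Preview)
Your proof is correct and is the standard change-of-variables argument for this identity. The paper does not actually prove this lemma; it is stated with a citation to a textbook result (Snyder and Shen, p.~664, C.31), so there is no in-paper proof to compare against, and your derivation is exactly what one would expect to find behind that reference.
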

Let $\dot{p}_i$, $\dot{\mbox{E}}[Z|\Omega_i]$ and $\dot{e}_W$ denote optimal linearisation parameters\footnote{We emphasise only dependency of $\dot{e}_W$ on $W$, albeit all linearisation parameters depend on $W$.} that minimise the maximum approximation error with respect to $\mathcal{L}(y,Z)$.
Then, by combining \eqref{eq:piecewise_lb} and Lemma \ref{lemma:standardization}, we obtain the following result.
\begin{lemma}\label{eq:standardized_piecewise_lb}
The $i$-th segment of $\mathcal{L}_{\textup{lb}}(x,\zeta)$ is
\begin{equation*}
\mathcal{L}^i_{\textup{lb}}(y,\zeta)\triangleq \sigma \left(\sum_{k=i}^W \dot{p}_k\dot{\mbox{\em E}}[Z|\Omega_k]-\frac{(y-\mu)}{\sigma}\sum_{k=i}^W \dot{p}_k\right),~~~~~
\dot{\mbox{\em E}}[Z|\Omega_i]\leq y\leq \dot{\mbox{\em E}}[Z|\Omega_{i+1}],
\end{equation*}
where $i=1,\ldots,W$; the $W+1$ segment is $y=0$, for $\dot{\mbox{\em E}}[Z|\Omega_W]\leq y \leq \infty$.
\end{lemma}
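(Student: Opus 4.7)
The plan is to derive the piecewise linear lower bound for $\mathcal{L}(\cdot,\zeta)$ by reducing the problem to the standard normal case, where the optimal linearisation parameters $\dot{p}_k$, $\dot{\mbox{E}}[Z|\Omega_k]$ and $\dot{e}_W$ are known. The two ingredients are already available in the excerpt: Lemma~\ref{lemma:standardization} (the standardisation identity $\mathcal{L}(y,\zeta)=\sigma\,\mathcal{L}((y-\mu)/\sigma,Z)$) and Eq.~\ref{eq:piecewise_lb} (the piecewise-linear Jensen lower bound for an arbitrary random variable). The proof is essentially a substitution; there is no deep obstacle, only bookkeeping of affine transformations of the breakpoints.

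First I would apply Eq.~\ref{eq:piecewise_lb} directly to the standard normal random variable $Z$ with the chosen partition $\Omega_1,\ldots,\Omega_W$ and the precompiled parameters $\dot{p}_k$, $\dot{\mbox{E}}[Z|\Omega_k]$. This yields, for the $i$-th segment of $\mathcal{L}_{\textup{lb}}(\cdot,Z)$, the expression
\[
\mathcal{L}^i_{\textup{lb}}(t,Z)=\sum_{k=i}^W \dot{p}_k\,\dot{\mbox{E}}[Z|\Omega_k]-t\sum_{k=i}^W \dot{p}_k, \qquad \dot{\mbox{E}}[Z|\Omega_i]\le t\le \dot{\mbox{E}}[Z|\Omega_{i+1}],
\]
together with the tail segment $\mathcal{L}^{W+1}_{\textup{lb}}(t,Z)=0$ for $t\ge \dot{\mbox{E}}[Z|\Omega_W]$.

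Next, I would invoke Lemma~\ref{lemma:standardization} with the substitution $t=(y-\mu)/\sigma$ and multiply the whole expression by $\sigma$. Since Jensen's inequality is preserved under positive affine rescaling of the random variable (equivalently, $\sigma\,\mathcal{L}_{\textup{lb}}(t,Z)\le \sigma\,\mathcal{L}(t,Z)=\mathcal{L}(\mu+\sigma t,\zeta)$), the bound transports from $Z$ to $\zeta$ and its segment structure is preserved: each breakpoint $\dot{\mbox{E}}[Z|\Omega_i]$ on the $t$-axis maps to $\mu+\sigma\,\dot{\mbox{E}}[Z|\Omega_i]$ on the $y$-axis, while the slope $-\sum_{k=i}^W \dot{p}_k$ becomes $-\sum_{k=i}^W \dot{p}_k$ in the variable $y$ (after cancelling the prefactor $\sigma$ with $1/\sigma$ coming from the chain rule). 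Substituting gives precisely the expression in the statement of the lemma.

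Finally, I would verify the endpoints: the last segment, which is identically zero for $Z$ beyond $\dot{\mbox{E}}[Z|\Omega_W]$, remains identically zero after multiplication by $\sigma$, yielding the stated $(W+1)$-th segment. The only point requiring mild care is that the conditional expectations $\dot{\mbox{E}}[Z|\Omega_k]$ are those of the \emph{standard} normal, so no re-optimisation of the partition is needed once the bound has been transported to $\zeta$; this is exactly why the standardisation step is useful in practice. With these observations the lemma follows as a direct corollary of Eq.~\ref{eq:piecewise_lb} and Lemma~\ref{lemma:standardization}.
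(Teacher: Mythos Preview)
Your proposal is correct and follows exactly the approach indicated in the paper, which simply states that the result is obtained ``by combining Eq.~\ref{eq:piecewise_lb} and Lemma~\ref{lemma:standardization}'' without giving further detail. Your write-up merely fleshes out this substitution explicitly, so there is no meaningful difference between the two.
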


This result, together with $\mathcal{L}_{\textup{ub}}$ presented in Section \ref{sec:piecewise}, can be leveraged to obtain compact and yet very effective mathematical programming formulations to compute upper and lower bounds for the SSKP. In addition to being competitive with the state-of-the-art in terms of computational time, our formulations are more convenient than those presented in \citep{Merzifonluolu2011} --- as we do not need to deal with a non-linear continuous relaxation that is neither convex nor concave --- and are also more general, in the sense that they extend to the case of a multivariate normal distribution; they are also more convenient than \citep{Fortz2012}, as they avoid callbacks.

Consider a vector of normally distributed random weights $\bm{\zeta}\triangleq(\zeta_1,\ldots,\zeta_n)$. Let $\bm{\mu}\triangleq(\mbox{E}[\zeta_1],\ldots,\mbox{E}[\zeta_n])$ be the vector of mean weights, and $\bm{\sigma^2}\triangleq(\sigma_1^2,\ldots,\sigma_n^2)$ be the vector of item weight variances. The SSKP can be approximated via the following deterministic mixed-integer nonlinear program, where $M$, $V$, and $S$ are the expected knapsack mean, variance, and standard deviation, respectively; \eqref{eq:pw_ub} directly follows from Lemma \ref{eq:standardized_piecewise_lb} after simplifying for $\sigma$, and $P$ represents the expected capacity shortfall. 

\modified{
\resizebox{0.95\linewidth}{!}{
\begin{minipage}{\linewidth}
\begin{align}
  \text{maximize} \quad & \mbox{E}[\bm{\varphi}']\bm{x} - cP \label{eq:objective_function}\\
  \text{subject to:} \quad
                    &M = \bm{x}'\bm{\mu}\label{eq:mean}\\
                    &V = \bm{x}'\bm{\sigma^2} \label{eq:variance}\\
                    &S = \sqrt{V}\label{eq:std}\\
                    &P \geq S\sum_{k=i}^W \dot{p}_k\dot{\mbox{E}}[\omega|\Omega_k]-(C-M)\sum_{k=i}^W\dot{p}_k +\dot{e}_WS & \quad i & = 1,\ldots,W\label{eq:pw_ub}\\
                    &P \geq e_WS \label{eq:pw_error}\\
                    &\bm{x} \in \{0,1\}^n. \label{eq:domain}\\
                    \nonumber
\end{align}
\end{minipage}
}
}

While piecewise linearisation has been leveraged before to embed the first order loss function in MILP models, the modelling strategy here introduced is different and, to the best of our knowledge, novel. For instance, in stochastic lot sizing \cite[see e.g.][]{Tarim2006}, one typically deals with a polynomial number of loss functions, this makes it possible to precompute the $\sigma$ terms of the expression in Lemma \ref{eq:standardized_piecewise_lb}, and to use binary variables to select the relevant loss function in a given constraint by relying on linear expressions. This is not possible in the case of the SSKP, which features an exponential number of loss functions that need to be considered --- one for each possible knapsack. Deviating from established modelling practices in stochastic inventory control, we therefore model the $\sigma$ term of Lemma \ref{eq:standardized_piecewise_lb} as a decision variable; this modelling choice leads to a non-linear and non-convex formulation stemming from the use of $\sqrt{}$ in \eqref{eq:std}. To revert to a fully linear formulation, we piecewise linearise $\sqrt{}$ (see \ref{sec:pw_sqrt}) by using the IBM ILOG OPL \texttt{piecewise} command \citep[][p. 150]{Nickel2022}. However, while linearising $\sqrt{}$, one must carefully select the number of segments, and build suitable piecewise linear upper or lower bounds. The model embeds Edmundson-Madanski's upper bound in \eqref{eq:pw_ub}. This means the model overestimates the first order loss function in the objective function; and in turn this means the model {\em underestimates} the true knapsack value. To preserve this behaviour, the model must embed a piecewise linear upper bound for $\sqrt{}$, so that $S$ --- the knapsack weight standard deviation --- is also overestimated.\footnote{Note that a reformulation $S^2 \geq V$ is not practicable as it is non-convex: it describes the region below the parabola $V = S^2$.} Conversely, to obtain the model under Jensen's lower bound, which {\em overestimates} the true knapsack value, it is sufficient to set $e_W=0$ in \eqref{eq:pw_ub}, and embed a piecewise linear lower bound for $\sqrt{}$, so that $S$ is underestimated.\footnote{In this case, reformulation $S^2 \leq V$ is convex, but under Jensen’s lower bound, the optimiser would minimise $S$ to zero to improve the objective, effectively collapsing the model to an expected-value formulation that ignores variability. Therefore piecewise linearisation of $\sqrt{}$ and an equality constraint \eqref{eq:std} are both necessary to preserve the intended role of $S$ as standard deviation.}
If both models return the same optimal knapsack, then this knapsack is optimal for the original problem; 
if not, we use Monte Carlo simulation to estimate the value of both these knapsacks and we retain the best one $x^*$. 
Finally, let $U$ be the value of the optimal solution of Jensen's lower bound model, and $L$ be that of Edmundson-Madanski's upper bound; we compute a conservative estimate of $x^*$ optimality gap as $(U-L)/L$.

\subsubsection{Multivariate normal weights}

\modified{Let us assume that normally distributed random weights in $\bm{\zeta}$ are correlated with variance-covariance matrix $\bm{\Sigma}$. The model presented in the previous section can be modified to accommodate this setup by replacing constraint \eqref{eq:variance} with the following quadratic inequality 
\begin{equation}\label{eq:variance_quadratic}
V \geq \bm{x}'\bm{\Sigma} \bm{x}, 
\end{equation}
where $\bm{x}'\bm{\Sigma} \bm{x}$ is convex ($\bm{\Sigma} \succeq 0$); moreover, by leveraging the law of total expectation \citep{Weiss2005}, it is easy to show that the objective function remains unchanged. In other words, the new model does not need to consider weight realisations, and can be built solely on the basis of unconditional means and covariances. In turn, this means that the SKP under multivariate normal weights can be tackled in a way that is similar to the case in which weights are independently distributed. The resulting mixed-integer quadratically constrained program is \eqref{eq:objective_function}, \eqref{eq:mean}, \eqref{eq:variance_quadratic}, \eqref{eq:std}, \eqref{eq:pw_ub}, \eqref{eq:pw_error}, and \eqref{eq:domain}. This model features convexity and semi-definiteness and can be solved by standard mathematical programming packages (e.g. CPLEX). Furthermore, since elements of $x$ are binary, the bilinear terms can be linearised by using McCormick envelopes \citep{McCormick1976}, but this typically adds a quadratic number of additional binary variables and it is not a practicable modelling path for large instances. Instead, we introduce reformulations to enhance model efficiency. First, let $\bm{I}_{nn}$ denote the $n \times n$ identity matrix and $ \mathbf{1}_{nn}$ the all-ones matrix. We exploit symmetry of the variance-covariance matrix and reformulate the quadratic inequality as follows: $V\geq \bm{x}' (\bm{\Sigma} \circ \bm{I}_n) \bm{x} + \bm{x}' (\bm{\Sigma} \circ (\mathbf{1}_n - \bm{I}_n)) \bm{x}$. This decomposition halves the number of quadratic terms in the constraint by separating diagonal and off-diagonal contributions. Second, we impose the bound $V\leq \mathbf{1}'_{n} \bm{\Sigma} \mathbf{1}_{n}$, which represents the maximum possible variance. Finally, we filter entries in $\bm{\Sigma}$ that are zero or negligible (i.e. below $10^{-6}$, the absolute tolerance of CPLEX).}

\subsubsection{Approximation accuracy}\label{subsec:accuracy}

Due to the interplay between the two piecewise linearisation strategies adopted to deal with the first order loss function and the $\sqrt{}$ function, existing results from the literature cannot be applied to analytically estimate the approximation accuracy of our mathematical programming approach.
In this section, we derive explicit conditions on the numbers of segments \(W+1\) (first-order loss) and \(Q\) (square-root) that guarantee a given absolute
optimality tolerance~\(\varepsilon>0\) for our piecewise linearised model. We are not aware of any similar analysis in the literature.

Let $V_{\max}$ be an upper bound for the variance of a knapsack weight.
For any \(Q\in\mathbb{N}\) let  
\(s\triangleq V_{\max}/Q\) and define break-points
\(b_k\triangleq ks\;(k=0,\dots,Q)\).
On each interval \([b_{k-1},b_k]\) we consider the chord
\[
   \psi_{Q}(v)\;=\;
     \sqrt{b_{k-1}}
     +\frac{\sqrt{b_k}-\sqrt{b_{k-1}}}{b_k-b_{k-1}}\,
      (v-b_{k-1}),
     \qquad v\in[b_{k-1},b_k].
\]
Because the square-root is concave, \(\psi_Q\) is a lower
piecewise-linear bound of \(\sqrt v\).
Using the standard construction of \ref{sec:pw_sqrt}, shifting it upward by
\(
  \Delta\triangleq \sqrt{s}/4
\)
yields an upper bound; we summarise both bounds as
\[
   \Psi_Q(v)
     \;=\;
     \begin{cases}
       \psi_Q(v),        &\text{lower bound},\\
       \psi_Q(v)+\Delta, &\text{upper bound}.
     \end{cases}
\]
The resulting worst-case deviation is
\begin{equation}\label{eq:deltaQdef}
   \delta_Q
      \;\triangleq\;
      \max_{v\in[0,V_{\max}]}
         \bigl|\sqrt{v}-\Psi_Q(v)\bigr|
      \;=\;\frac{\sqrt{s}}{4}.
\end{equation}

Recall that \(\dot e_{W}\) denotes the maximum approximation error of a $(W+1)$-segment Jensen bound of the standard normal loss.
After scaling to a normal random variable with variance~\(V\) via Lemma \ref{lemma:standardization} this yields
\vspace{-0.5em}
\begin{equation}\label{eq:lossScale}
   e_{W}\;=\;\dot e_{W}\,S,
   \vspace{-0.5em}
\end{equation}
where \(S\triangleq\sqrt V\) is the standard deviation of the
aggregate knapsack weight.

Fix a partition with \(W\) subregions and denote standard normal loss linearisation probability masses
and conditional means by \(\dot p_k\) and
\(\dot{\mbox{E}}[Z\mid\Omega_k]\) $(k=1,\dots,W)$.
For each
\(i=1,\dots,W\) define
\begin{equation}\label{eq:Aidef}
   A_i
   \;=\;
   \sum_{k=i}^{W}
     \dot p_k\,\dot{\mbox{E}}[Z\mid\Omega_k],
   \qquad
   A_{\max}
   \;=\;
   \max_{i=1,\dots,W}A_i.
\end{equation}

\begin{lemma}\label{lemma:Amax}
If $W$ is even, then $A_{\max}=\bar{A}_{\max} = \varphi(0) = 1/\sqrt{2\pi}$;
if $W$ is odd then $A_{\max}<\bar{A}_{\max}$. 
\end{lemma}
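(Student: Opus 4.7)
The plan is to reduce each $A_i$ to a point evaluation of the standard normal density $\varphi$ at the left endpoint of $\Omega_i$, which immediately pins down the upper bound $\bar A_{\max}$ and reduces the claim to a parity check on the symmetric optimal partition.

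Denote by $t_i$ the left endpoint of $\Omega_i$, so $t_1=-\infty$, $t_{W+1}=\infty$, and $t_2,\dots,t_W$ are the $W-1$ interior breakpoints of the partition. Since $\dot p_k\,\dot{\mbox{E}}[Z\mid\Omega_k]=\int_{\Omega_k} z\,\varphi(z)\,dz$, summing from $k=i$ to $W$ yields $A_i=\int_{t_i}^{\infty} z\,\varphi(z)\,dz$. Using $\tfrac{d}{dz}[-\varphi(z)]=z\,\varphi(z)$, this integral evaluates to $\varphi(t_i)$, so $A_i=\varphi(t_i)$ for all $i=1,\dots,W$ (with $A_1=\varphi(-\infty)=0$). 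Since $\varphi$ attains its global maximum at the origin, $A_i\le\varphi(0)=1/\sqrt{2\pi}$ with equality iff $t_i=0$, which identifies $\bar A_{\max}=1/\sqrt{2\pi}$.

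It remains to decide, as a function of $W$, whether some $t_i$ vanishes under the optimal partition. The minimax partition of \cite{citeulike:12820831} and \cite{Imamoto2008} is constructed by equalising approximation errors across adjacent intervals; because $Z$ is symmetric about $0$ and $\mathcal{L}(\cdot,Z)$ inherits this symmetry, the optimal breakpoint set is invariant under $t\mapsto -t$. This can be formalised by a reflection argument: the mirror image of any optimal breakpoint configuration achieves the same worst-case error, and the equioscillation characterisation of the optimum forces the two configurations to coincide. Once symmetry of the $W-1$ interior breakpoints is established, they decompose into pairs $\pm b$ together with, at most, a single central breakpoint at $0$; this central breakpoint is present precisely when $W-1$ is odd, i.e.\ when $W$ is even. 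For even $W$ this gives some $t_i=0$ and hence $A_{\max}=\varphi(0)=\bar A_{\max}$; for odd $W$ every $t_i\neq 0$, so $A_i=\varphi(t_i)<\varphi(0)$ uniformly in $i$ and $A_{\max}<\bar A_{\max}$.

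The main obstacle is the symmetry argument for the optimal partition; the tail-integration identity $A_i=\varphi(t_i)$ is a one-line calculus fact and the parity count is then immediate. If an explicit symmetry proof is deemed too heavy, one can alternatively note that the parameter tables used to generate $\{\dot p_k,\dot{\mbox{E}}[Z\mid\Omega_k]\}$ (e.g.\ those in \cite{Rossi2014}) are themselves symmetric, which suffices for our purposes.
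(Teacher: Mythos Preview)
Your proposal is correct and follows essentially the same route as the paper: both reduce $A_i$ to $\varphi$ evaluated at the left breakpoint of $\Omega_i$ via the identity $\int_{t}^{\infty} z\,\varphi(z)\,dz=\varphi(t)$, then invoke symmetry of the partition about the origin and a parity count on the $W-1$ interior breakpoints. The only notable difference is that you justify the symmetry of the optimal (minimax) partition explicitly via a reflection/equioscillation argument, whereas the paper simply asserts symmetry of the standard Jensen partition; your version is slightly more self-contained in this respect.
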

\begin{proof}
see \ref{sec:lemma_Amax_proof}.
\end{proof}
 
Let \(x\) be an arbitrary feasible 0-1 decision vector and let
\(Z^{\star}\) denote the true objective value.
Replacing the loss by the Edmundson--Madanski upper bound
increases the objective by at most
\(c\,e_{W}=c\,\dot e_{W}S\).
Afterwards, every occurrence of \(S\) in the objective \emph{and} in
the right-hand side of constraint~\eqref{eq:pw_ub}
is over-estimated by no more than~\(\delta_Q\).
Consequently the additional error introduced by the
square-root approximation does not exceed
\(c\,(A_{\max}+\dot e_{W})\,\delta_Q\).
Hence, define \(S_{\max}\triangleq\sqrt{V_{\max}}\), the absolute objective error satisfies
\begin{equation}\label{eq:totalErr}
   \bigl|Z(x)-Z^{\star}\bigr|
   \;\le\;
   c\,\dot e_{W}S_{\max}
   \;+\;
   c\,(A_{\max}+\dot e_{W})\,\delta_Q.
\end{equation}

\begin{lemma}[Global $\varepsilon$-accuracy]\label{lem:epsilon-new}
Let \(c>0\) and \(\varepsilon>0\) be given.
Choose integers \(W,Q\) such that
\begin{align}
  \dot e_{W} &\;\le\;\frac{\varepsilon}{2c\,S_{\max}},  \label{eq:condWnew}\\[2mm]
  \delta_Q   &\;\le\;\frac{\varepsilon}{2c\,(A_{\max}+\dot e_{W})},
                 \label{eq:condQnew}
\end{align}
where \(A_{\max}\) is defined in~\eqref{eq:Aidef} and
\(\delta_Q\) in~\eqref{eq:deltaQdef}.
Then both MILP relaxations satisfy
\begin{equation}\label{eq:gapStatement}
   0 \;\le\; Z^{\mathrm U}(W,Q)-Z^{\star}
      \;\le\; \varepsilon,
   \qquad
   0 \;\le\; Z^{\star}-Z^{\mathrm L}(W,Q)
      \;\le\; \varepsilon,
\end{equation}
and hence
\(Z^{\mathrm U}(W,Q)-Z^{\mathrm L}(W,Q)\le 2\varepsilon\).
\end{lemma}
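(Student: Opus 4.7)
The plan is to derive the two one-sided gap inequalities in \eqref{eq:gapStatement} directly from the per-$x$ error bound \eqref{eq:totalErr}, combined with the one-sided ``sandwich'' structure of the two relaxations already established in Section \ref{sec:milp_sskp}. Write $\tilde Z^{U}(x)$ (resp.\ $\tilde Z^{L}(x)$) for the objective value of the Jensen-based (resp.\ Edmundson--Madanski-based) model at a feasible $x$, and let $x^{U},x^{L},x^{\star}$ be optimisers of the upper model, the lower model, and the true problem respectively, so that $Z^{U}(W,Q)=\tilde Z^{U}(x^{U})$, $Z^{L}(W,Q)=\tilde Z^{L}(x^{L})$, and $Z^{\star}=Z(x^{\star})$. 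Three ingredients will be combined: (i) the conditions \eqref{eq:condWnew}--\eqref{eq:condQnew} turn \eqref{eq:totalErr} into a per-$x$ bound $|\tilde Z(x)-Z(x)|\le\varepsilon$ that holds for every feasible $x$ and either relaxation; (ii) the construction of the two models ensures $\tilde Z^{U}(x)\ge Z(x)\ge\tilde Z^{L}(x)$ for every feasible $x$, as justified in the discussion following Figure \ref{fig:sskp_milp}; (iii) optimality of $x^{\star}$ in the true problem gives $Z(x)\le Z^{\star}$ for every feasible $x$.

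Concretely, I would first substitute \eqref{eq:condWnew} into the first summand of \eqref{eq:totalErr} to obtain $c\,\dot e_{W}S_{\max}\le\varepsilon/2$, and \eqref{eq:condQnew} into the second summand to obtain $c\,(A_{\max}+\dot e_{W})\,\delta_{Q}\le\varepsilon/2$; adding them yields the per-$x$ bound $|\tilde Z(x)-Z(x)|\le\varepsilon$ that serves as the workhorse for the remainder of the proof. For the upper side I would chain
\[Z^{U}(W,Q)=\tilde Z^{U}(x^{U})\ge\tilde Z^{U}(x^{\star})\ge Z(x^{\star})=Z^{\star},\]
which gives the left inequality, and then $Z^{U}(W,Q)=\tilde Z^{U}(x^{U})\le Z(x^{U})+\varepsilon\le Z^{\star}+\varepsilon$, where the first step is the per-$x$ bound applied to $x^{U}$ and the last step uses $Z(x^{U})\le Z^{\star}$ by optimality of $x^{\star}$. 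The argument for the lower side is dual: $Z^{L}(W,Q)=\tilde Z^{L}(x^{L})\le Z(x^{L})\le Z^{\star}$ gives the right inequality, and evaluating the lower relaxation at $x^{\star}$ yields $Z^{L}(W,Q)\ge\tilde Z^{L}(x^{\star})\ge Z(x^{\star})-\varepsilon=Z^{\star}-\varepsilon$. Summing the two one-sided bounds produces $Z^{U}(W,Q)-Z^{L}(W,Q)\le 2\varepsilon$.

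The main obstacle is not any single calculation but the bookkeeping of signs: one must keep track of the fact that the linearisation directions of $\sqrt{\cdot}$ and of the first-order loss are deliberately coordinated in each relaxation so that the constraint on $P$ and the objective produce a one-sided error, which in turn is what makes ingredient (ii) hold. Since that coordination is already argued in the paragraphs surrounding Figure \ref{fig:sskp_milp}, I would cite it rather than re-derive it, so the whole proof reduces to plugging \eqref{eq:condWnew}--\eqref{eq:condQnew} into \eqref{eq:totalErr} and executing the two short chains of inequalities displayed above.
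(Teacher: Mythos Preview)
Your proposal is correct and follows essentially the same approach as the paper: substitute \eqref{eq:condWnew}--\eqref{eq:condQnew} into \eqref{eq:totalErr} to bound each summand by $\varepsilon/2$, then apply the resulting per-$x$ error bound at the optimisers of the two relaxations. The paper's proof is extremely terse (three sentences) and simply says ``Applying~\eqref{eq:totalErr} to the optimal solutions of the two MILPs yields~\eqref{eq:gapStatement}''; your chains of inequalities involving $x^{U},x^{L},x^{\star}$ and the sandwich $\tilde Z^{U}(x)\ge Z(x)\ge \tilde Z^{L}(x)$ are exactly the details the paper leaves to the reader.
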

\begin{proof}
Under~\eqref{eq:condWnew} the first term on the right-hand side of
\eqref{eq:totalErr} does not exceed \(\varepsilon/2\).
Under~\eqref{eq:condQnew} the second term is also bounded by
\(\varepsilon/2\).
Applying~\eqref{eq:totalErr} to the optimal solutions of the two MILPs
yields~\eqref{eq:gapStatement}.
\end{proof}

We finally characterise the asymptotic growth of \(W\) and \(Q\).
For a uniform Jensen partition one has
\(\dot e_{W}=O(W^{-2})\) --- see \ref{sec:convergence_pwl}; moreover,
\(\delta_Q=\Theta(Q^{-1/2})\) by~\eqref{eq:deltaQdef}.
Condition~\eqref{eq:condWnew} therefore implies
\(W=O(\varepsilon^{-1/2})\).
Substituting $\dot e_{W}=O(\varepsilon)$ and
$A_{\max}=O(1)$ into~\eqref{eq:condQnew} shows that
$Q=\Theta(\varepsilon^{-2})$,
so the required number of $\sqrt{\cdot}$ segments grows quadratically
as the tolerance $\varepsilon$ decreases. A numerical example illustrating these results is provided in \ref{sec:numerical_example}.

\section{The Dynamic Stochastic Knapsack}\label{sec:dskp}

\modified{In this section, we generalise the stochastic knapsack problem to a dynamic setting. Instead of formulating the problem as a two-stage stochastic program in which items must be selected at the beginning of the planning horizon, we assume that items arrive sequentially --- the item number denoting its position in the sequence; when item $i$ arrives, we must decide if it should be included in the knapsack or not. We assume that an item weight is revealed only after this decision is made, but the models and the approaches here presented can be easily modified to accommodate a situation in which the item weight is revealed after the item arrives, but before this decision is made. 
In contrast to most existing literature, we assume that item weights are potentially correlated. Thus, $\omega_i$ follows the conditional probability density function $g_{\omega_{i}}(o_i|\bm{O}_{i-1})$, where $\bm{O}_i\triangleq(o_1,\ldots,o_i)$, $\bm{O}_0\triangleq\emptyset$, and $o_i$ denotes the realisation of random variable $\omega_{i}$.

We formulate this problem as a stochastic dynamic program \citep{Bellman:1957}.
\paragraph{State} The state is a tuple $\langle i, C, \bm{O}_{i-1} \rangle$, where $i$ is the index of the item to be considered, $C$ is the residual capacity, and $\bm{O}_{i-1}$ are realised weights of items previously considered.
\paragraph{Action} The action is a binary variable $x_i\in\{0,1\}$ denoting item $i$ selection.
\noindent
\paragraph{Immediate profit} The immediate profit after selection of object $i$ is 
\[f_i(x, C, o_i)\triangleq\left\{
\begin{array}{lr}
x\mbox{E}[\varphi_i] & i<n\\
x\mbox{E}[\varphi_i] - c\max(x o_i - C) & i=n
\end{array}
\right.,\]
where $\varphi_i$ is the $i$th element of vector $\varphi$ as defined in Section \ref{sec:sskp}.
\paragraph{Functional equation} Let $F_i(C, \bm{O}_{i-1})$ denote the expected total profit of an optimal policy for the selection of items $i,\ldots,n$, given residual capacity $C$, and realisations $\bm{O}_{i-1}$ for previous items. Then, for $i=1,\ldots,n$
\[F_i(C, \bm{O}_{i-1})\triangleq \max_{x\in\{0,1\}} \int_{\omega_i} \Big(f_i(x, C, o) + F_{i+1}(C-o,\bm{O}_i)\Big)g_{\omega_i}(o|\bm{O}_{i-1}),\]
where $F_{n+1}\triangleq 0$ represents the boundary condition.}

\modified{\subsection{A receding horizon heuristic for the DSKP}\label{sec:receding_horizon}}

\modified{To tackle the stochastic knapsack problem in a dynamic setting (DSKP), we leverage the MILP models discussed for the SSKP under a receding horizon control framework \citep{Kwon2005-fs}. In contrast to rolling horizon control, in which a fixed-length time window rolls many times over a potentially infinite planning horizon, receding horizon control sets the length of the time window equal to the number of remaining periods in the planning horizon. In the context of the DSKP, receding horizon control proceeds as follows: when an item arrives, a selection plan is obtained for all remaining items, but only the imminent decision is implemented; re-planning is then carried out when the next item arrives, until the end of the item sequence. We will discuss the algorithm for the more general case in which item weights are correlated.

We first summarise some basic results from multivariate analysis. Let $\bm{\zeta}$ be an $n$-variate normal random variable with mean $\mbox{E}\bm{\zeta}$ and covariance matrix $\bm{\Sigma}$. Let $\bm{\zeta}\triangleq(\bm{\zeta}_1~\bm{\zeta}_2)'$ be a partitioned multivariate normal random $n$-vector, where $\bm{\zeta}_1\triangleq(\omega_1,\ldots,\omega_p)$, $\bm{\zeta}_2\triangleq(\omega_{p+1},\ldots,\omega_n)$, with mean $\mbox{E}\bm{\zeta}\triangleq(\mbox{E}\bm{\zeta}_1~\mbox{E}\bm{\zeta}_2)'$ and covariance 
\[\Sigma\triangleq
\left[
\begin{matrix}
\bm{\Sigma}_{11}&\bm{\Sigma}_{12}\\
\bm{\Sigma}_{21}&\bm{\Sigma}_{22}
\end{matrix}
\right].\] Then the conditional distribution of $\bm{\zeta}_2$ given $\bm{\zeta}_1=\bm{z}_1$ is normally distributed with mean $\mbox{E}\bm{\zeta}_2+\bm{\Sigma}_{21}\bm{\Sigma}_{11}^{-1}(\bm{z}_1-\mbox{E}\bm{\zeta}_1)$, and variance $\bm{\Sigma}_{22}-\bm{\Sigma}_{21}\bm{\Sigma}_{11}^{-1}\bm{\Sigma}_{12}$ \citep{Billingsley2017}. 

Our algorithm (see Algorithm \ref{alg:receding_horizon}) proceeds as follows: when item $k$ arrives, weight realisations for all previous items are available, we therefore update weight distributions for item $k$ and all successive items (line \ref{alg:update_conditional_distributions}) as discussed in the previous paragraph; then selection decisions for items $k,k+1,\ldots,n$ are obtained by solving one of the MILP models for the SSKP discussed in Section \ref{sec:milp_sskp} (line \ref{alg:solve_MILP}); the relevant decision for item $k$ is implemented (line \ref{alg:implement_decision}), item $k$ weight is observed, and knapsack profit (line \ref{alg:update_profit}) and residual capacity (line \ref{alg:update_capacity}) are updated; after the last item decision is implemented, shortage cost is incurred (line \ref{alg:shortage_cost}).}

\begin{algorithm}
\begin{algorithmic}[1]
\State knapsack value $v \gets 0$;
\State residual capacity $q \gets C$;
\For{$k \gets 1$ to $n$} 
    \State update the conditional distribution of $(\omega_{k},\ldots,\omega_n)$ given realisations $\bm{O}_{k-1}$; \label{alg:update_conditional_distributions}
    \State solve the MILP for a SSKP over items $k,\ldots,n$ with residual capacity $q$; \label{alg:solve_MILP}
    \State implement imminent decision $x_k$ for item $k$; \label{alg:implement_decision}
    \State $v \gets v + \varphi_k x_k$; \label{alg:update_profit}
    \State $q \gets q - o_kx_k$; \label{alg:update_capacity}
\EndFor
\State $v \gets v + c\min(q,0)$; \label{alg:shortage_cost}
\end{algorithmic}
\caption{Receding horizon heuristic}
\label{alg:receding_horizon}
\end{algorithm}

The heuristic discussed here can be immediately applied to tackle stochastic knapsack problems with weight forecast update following the Martingale Model of Forecast Evolution (MMFE) \citep{HEATH1994}. The AR, MA, and ARMA processes are special cases of the MMFE, therefore our approach can tackle all these time-series processes.

\modified{\subsection{A statistical optimality gap certificate for the receding horizon heuristic}
\label{sec:rh_certificate}

The receding horizon (RH) heuristic of Section \ref{sec:receding_horizon} yields a feasible policy for the DSKP. As such, its expected value is a lower bound on the optimal value of the stochastic dynamic program in Section \ref{sec:dskp}. 
In this section, we describe a statistical optimality gap certificate that combines this observation with a path-wise upper bound based on perfect information \cite[see e.g.][]{Birge2011-oa}; the resulting simulation-based certification procedure is related in spirit to Monte Carlo bounding approaches \citep{Mak1999}.

Let $x_k\in\{0,1\}$ denote the imminent decision prescribed by the RH policy at stage $k$, given residual capacity $C$ and previously observed weights $\bm{O}_{k-1}$. For a realised sample path $\bm{O}_n^{(m)}\triangleq(o_1^{(m)},\ldots,o_n^{(m)})$, let $\ell^{(m)}$ be the realised value obtained by simulating the RH policy on that path, namely by sequentially applying $x_k$, updating the residual capacity after each realised weight observation, and finally charging the shortage penalty. Since this policy is feasible for the original DSKP, if $F_1(C,\bm{O}_0)$ denotes the optimal value from the initial state, then
$
\mbox{\em E}[\ell^{(m)}]\leq F_1(C,\bm{O}_0)
$.

To obtain an upper bound, for the same realised sample path $\bm{O}_n^{(m)}$ we consider the deterministic knapsack problem in which all item weights are assumed to be known in advance. In other words, we solve the responsive knapsack problem induced by the weight vector $\bm{O}_n^{(m)}$, and let $u^{(m)}$ denote its optimal objective value:
\[
u^{(m)}
\triangleq
\max_{x\in\{0,1\}^n}
\left\{
\mbox{\em E}[\bm{\varphi}]'\bm{x}
-
c\max\!\left((\bm{O}_n^{(m)})'x-C,0\right)
\right\}.
\]
This is precisely the perfect-information relaxation of the problem for sample path $m$, hence it is optimistic and therefore
$
F_1(C,\bm{O}_0)\leq \mbox{\em E}[u^{(m)}]
$.
Quantity $\mbox{\em E}[u^{(m)}]$ is the expected value with perfect information (EVwPI).

Now suppose that $N$ sample paths are generated. We define
$\bar{\ell}_N \triangleq \frac{1}{N}\sum_{m=1}^{N}\ell^{(m)}$,
and
$\bar{u}_N \triangleq \frac{1}{N}\sum_{m=1}^{N}u^{(m)}$.
To improve stability, the same set of sampled paths is used to compute both $\bar{\ell}_N$ and $\bar{u}_N$. 
Let $s_{\ell}$ and $s_u$ denote the sample standard deviations of
$\ell^{(1)},\ldots,\ell^{(N)}$ and $u^{(1)},\ldots,u^{(N)}$, respectively. We then compute one-sided Student-$t$ confidence bounds:
\[
L_{1-\alpha/2}
\triangleq
\bar{\ell}_N
-
t_{N-1,1-\alpha/2}\frac{s_{\ell}}{\sqrt{N}},
\qquad
U_{1-\alpha/2}
\triangleq
\bar{u}_N
+
t_{N-1,1-\alpha/2}\frac{s_u}{\sqrt{N}},
\]
where $t_{N-1,1-\alpha/2}$ is the $(1-\alpha/2)$ quantile of the Student-$t$ distribution with $N-1$ degrees of freedom. By Bonferroni's inequality, with confidence at least $1-\alpha$, $L_{1-\alpha/2}$ is a valid lower bound for the value of the feasible RH policy and $U_{1-\alpha/2}$ is a valid upper bound for the EVwPI relaxation.

Finally, we define the absolute 
($
\Delta_{\mathrm{abs}}
\triangleq
U_{1-\alpha/2}-L_{1-\alpha/2},
$) and relative 
($
\Delta_{\mathrm{rel}}
\triangleq
(U_{1-\alpha/2}-L_{1-\alpha/2})/L_{1-\alpha/2}
$) certification measures. The quantity $\Delta_{\mathrm{abs}}$ provides a conservative certification measure for the distance between a feasible adaptive policy and a perfect-information relaxation, while $\Delta_{\mathrm{rel}}$ reports the same information on a relative scale. This certificate is inexpensive to compute, statistically meaningful, and can be utilised with the RH implementation discussed in this paper to obtain certificates for larger instances that cannot be solved to optimality using stochastic dynamic programming. 
Moreover, the upper bound can be tightened by leveraging the batch-means approach discussed in \cite{Mak1999}.
}

\section{Computational study}\label{sec:experiments}

\modified{This section presents an extensive numerical study on the computational performance and optimality gap of the proposed mathematical programming approach. After introducing our test bed, we examine both the static (SSKP) and dynamic (DSKP) versions of the stochastic knapsack problem.}

Numerical experiments are conducted using IBM ILOG CPLEX Optimization Studio 22.11 with default settings (except where stated otherwise) and Java on an Intel(R) Xeon(R) E-2146G CPU @ 3.50GHz, 3504 Mhz, 6 Core(s), 12 Logical Processor(s) with 64GB of RAM. Monte Carlo simulations \citep{Hammersley1964} are carried out via Latin Hypercube Sampling (LHS) \citep{McKay79} using Common Random Numbers \citep{Conway63} and $10^5$ runs. Relative tolerance for all approaches is set to $10^{-4}$ (equivalent to CPLEX default), which corresponds to a percent optimality gap of $10^{-2}$. All methods are assigned a time limit of 10 minutes. Code and numerical results: \url{https://github.com/gwr3n/skp}.

\subsection{Test bed generation}

We follow the benchmark design from \cite{Pisinger2005}. Let $R=100$  represent the so-called {\em data range}, that is the maximum value of an item expected weight and expected revenue per unit; note that this upper bound is comparable to that considered in \citep{Merzifonluolu2011}. 
We consider the following {\em instance types}, where $\mathrm{Unif}[a,b]$ denotes a uniform distribution over $[a,b]$.

\noindent
\textbf{U} (Uncorrelated): $\mbox{E}[\omega_i] \sim \mathrm{Unif}[1,R]$, and $\mbox{E}[\varphi_i] \sim \mathrm{Unif}[1,R]$, independently.

\noindent
\textbf{WC} (Weakly Correlated): $\mbox{E}[\omega_i] \sim \mathrm{Unif}[1,R]$; $\mbox{E}[\varphi_i] \sim \mathrm{Unif}[U,U+2R/10]$, where $U \triangleq \max(\mbox{E}[\omega_i]-R/10,1)$.

\noindent
\textbf{SC} (Strongly Correlated): $\mbox{E}[\omega_i] \sim \mathrm{Unif}[1,R]$; $\mbox{E}[\varphi_i] \triangleq \mbox{E}[\omega_i] + R/10$.

\noindent
\textbf{ISC} (Inverse Strongly Correlated): $\mbox{E}[\varphi_i] \sim \mathrm{Unif}[1,R]$; $\mbox{E}[\omega_i] \triangleq \mbox{E}[\varphi_i] + R/10$.

\noindent
\textbf{ASC} (Almost Strongly Correlated): $\mbox{E}[\omega_i] \sim \mathrm{Unif}[1,R]$; $\mbox{E}[\varphi_i] \sim \mathrm{Unif}[\mbox{E}[\omega_i]+R/10-R/500,\mbox{E}[\omega_i]+R/10+R/500]$.

\noindent
\textbf{SS} (Subset Sum): $\mbox{E}[\omega_i] \sim \mathrm{Unif}[1,R]$; $\mbox{E}[\varphi_i] \triangleq \mbox{E}[\omega_i]$.

\noindent
\textbf{USW} (Uncorrelated with Similar Weights): $\mbox{E}[\omega_i] \sim \mathrm{Unif}[R,R+10]$; $\mbox{E}[\varphi_i] \sim \mathrm{Unif}[1,R]$.

\noindent
\textbf{PC} (Profit Ceiling): $\mbox{E}[\omega_i] \sim \mathrm{Unif}[1,R]$; $\mbox{E}[\varphi_i] = d\lceil \mbox{E}[\omega_i]/d \rceil$, with $d=3$; i.e. expected revenue per unit is a multiple of a given parameter $d$.

\noindent
\textbf{C} (Circle): $\mbox{E}[\omega_i] \sim \mathrm{Unif}[1,R]$; $\mbox{E}[\varphi_i] = d\sqrt{4R^2 - (\mbox{E}[\omega_i] - 2R)^2}$, with $d=2/3$; i.e. expected revenue per unit as a function of the expected weights form an arc of an ellipsis.

We consider different problem sizes $n$, and to ``smooth out'' variations due to the choice of capacity as described in \citep{Pisinger1999} for each combination of $R$, instance type, and problem size, we generate $H=10$ instances whose capacity is defined as follows,
\[C_h\triangleq \frac{h}{H+1}\sum_{i=1}^{n}\mbox{E}[\omega_i]\qquad\mbox{for}~h=1,\ldots,H.\]  
Let $\mu_i$ and $\sigma_i$ be the expected value and the standard deviation of $\omega_i$, respectively. For the problem under normally distributed weights, we consider $n\in\{25,50,100,200,500\}$ and two levels for the coefficient of variation $c_v\in\{0.1,0.2\}$, where $c_v\triangleq\sigma_i/\mu_i$. 
For the problem under multivariate normal weights, we consider $n\in\{25,50,100\}$ two levels for the coefficient of variation $c_v$ as discussed above, and two levels for the correlation coefficient $\rho\in\{0.75,0.95\}$. 
For the problem under generic distributions, we consider $n\in\{25,50\}$ gamma and lognormal distributed weights $\omega_i$ with parameters such that $c_v=\sigma_i/\mu_i$.  
Finally, the shortage cost is $c=10$. The test bed comprises a total of 900 instances under normal distribution, 1080 instances under multivariate normal distribution, and 360 instances under both gamma and lognormal distributions. Our benchmark design generalises the one adopted by \cite{Fortz2012}, which in turn is based on \citep{Martello1999}.




\subsection{Static Stochastic Knapsack Problem}

\modified{In this section, we report computational results for the MILP approaches that tackle the SSKP. For brevity, we label the MILP model from Section \ref{sec:normal} as PWL. We first introduce relevant baselines from the literature (Section \ref{sec:baselines}); then we proceed and illustrate computational results for normal (Section \ref{sec:results_PWL_normal}), multivariate normal (Section \ref{sec:results_PWL_multi_normal}), and generic (Section \ref{sec:results_PWL_generic}) item weight distributions.}

\subsubsection{Baselines}\label{sec:baselines}

We consider the following four baselines from the literature. 

\vspace{-0.5em}
\paragraph{SAA} The first baseline implements the Sample Average Approximation of \cite{Kleywegt2002} with large‑deviation (LD) guided sizing. Instead of fixing the scenario size via their crude global bound $ \hat N_{\text{LD}} $ (2.23) --- which under our relative tolerance regime is far too conservative \cite[see][p.\ 485]{Kleywegt2002} --- we adaptively choose (N) using their Proposition 2.2 under Assumption A (Section 2.2). The two-phase implementation we adopted is outlined in \ref{sec:saa_implementation}.

\vspace{-0.5em}
\paragraph{LC} The second baseline, is the LP/NLP algorithm discussed in \citep[][Section 4.1.1]{Fortz2012} that relies on lazy (global) cuts implemented in CPLEX. \modified{To implement this method it is necessary to disable the dual presolve and dynamic search in line with the discussion in \citep[][p. 213]{Fortz2012}.} While we left all other CPLEX settings to their default values, it should be noted that a lazy (global) cuts strategy prevents CPLEX from using multithreading and other default features. As we will discuss later, this is clearly a major limitation of LC compared to other approaches here considered. Finally, LC is exact and, if it does not time out, yields an optimal solution within numerical tolerance.

\vspace{-0.5em}
\paragraph{KKT} The third baseline is the KKT-based heuristic discussed in \citep[][Section 4.2]{Merzifonluolu2011}. 

\vspace{-0.5em}
\paragraph{RNG}The fourth baseline is the shortest path-based approach of \cite{Range2018}, which we aligned to our problem setting by not enforcing the chance-constraint in their model --- this is achieved by setting the maximum overfill to a sufficiently large value --- and by only considering the penalty function for violation of the capacity constraint.

\subsubsection{Performance of PWL under normally distributed item weights}\label{sec:results_PWL_normal}

Before comparing our PWL approach against established baselines in the literature, we investigate its computational and optimality gap\footnote{Recall that, as discussed in Section \ref{sec:normal}, by solving Jensen's and Edmundson-Madanski’s variants of PWL, it is possible to directly obtain a conservative estimate of the optimality gap for the optimal solution obtained.} performance. In particular, we investigate the sensitivity of runtimes and of optimality gaps with respect to the number of first-order loss segments utilised. 

Recall PWL leverages piecewise linearisation of two functions: the first-order loss and the $\sqrt{}$. By leveraging Lemma \ref{lemma:Amax} and Lemma \ref{lem:epsilon-new}, we fix the square root piecewise linearisation step size ($s=0.1$), so that the component $c\,(A_{\max}+\dot e_{W})\,\delta_Q$ of the absolute objective error in \eqref{eq:totalErr} is negligible in the context of our test bed. The effect of the remaining component of the absolute objective error, i.e. $c\,\dot e_{W}S_{\max}$, dominates as it depends on $S_{\max}$, and can be reduced by increasing $W$ and thus reducing $e_{W}$; this is the aspect we investigate next by varying the number of partitions $W\in\{25,50,100\}$ used in the first-order loss piecewise linearisation. For the 900 instances considered under {\em normally distributed item weights} computational times and optimality gaps for varying $W$ are shown in Figure~\ref{fig:pwl_comp_time_segs} and Figure~\ref{fig:pwl_rel_opt_gap}, respectively.
PWL scales remarkably well in the number of first-order loss segments utilised. Moreover, in the context of our test bed, $W=100$ is sufficient to ensure most instances are solved to optimality (up to the prescribed relative tolerance) within the given time limit. In what follows, we will therefore adopt this $W$ value.
\begin{figure}
\centering
\includegraphics[width=\textwidth, angle=0]{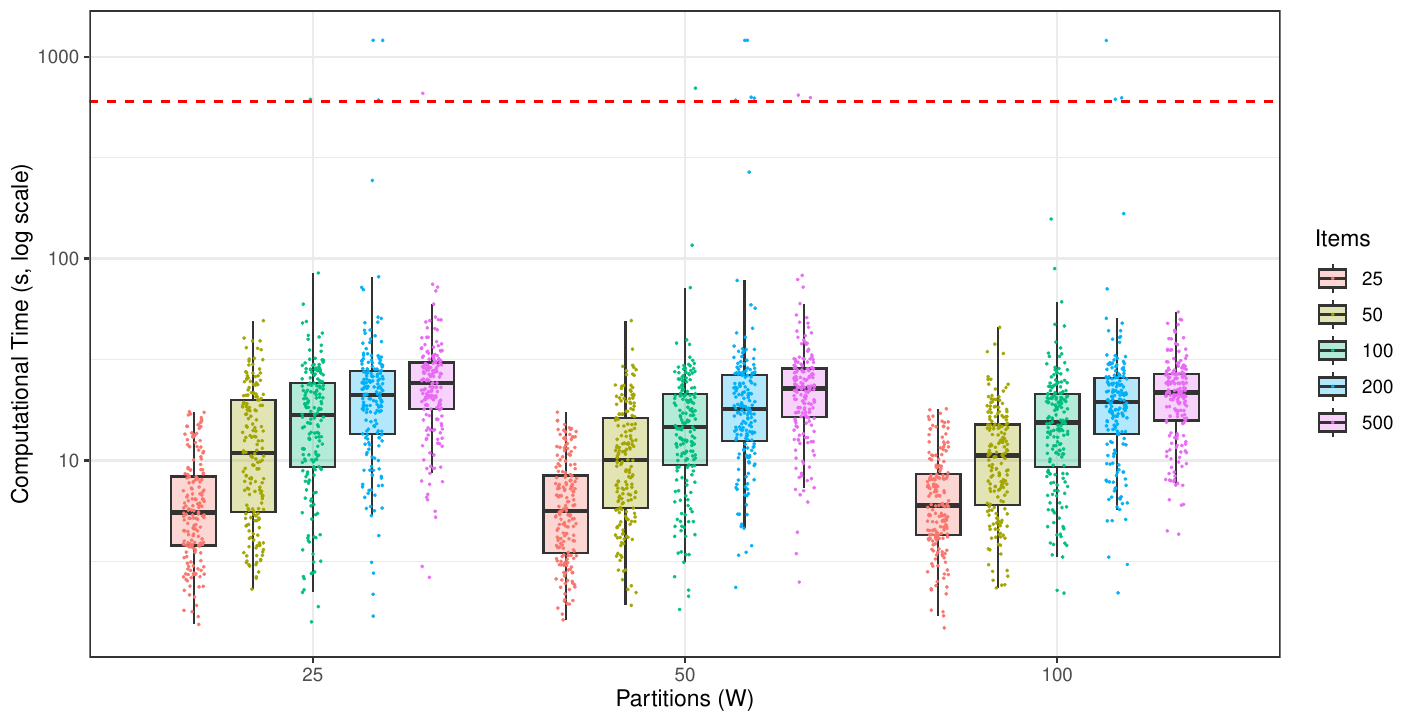}
\caption{Boxplot with jitter illustrating runtimes for PWL under normally distributed item weights and varying number of partitions ($W$); the dashed red line represents the time limit of 10 minutes.}
\label{fig:pwl_comp_time_segs}
\end{figure}
\begin{figure}
\centering
\includegraphics[width=\textwidth, angle=0]{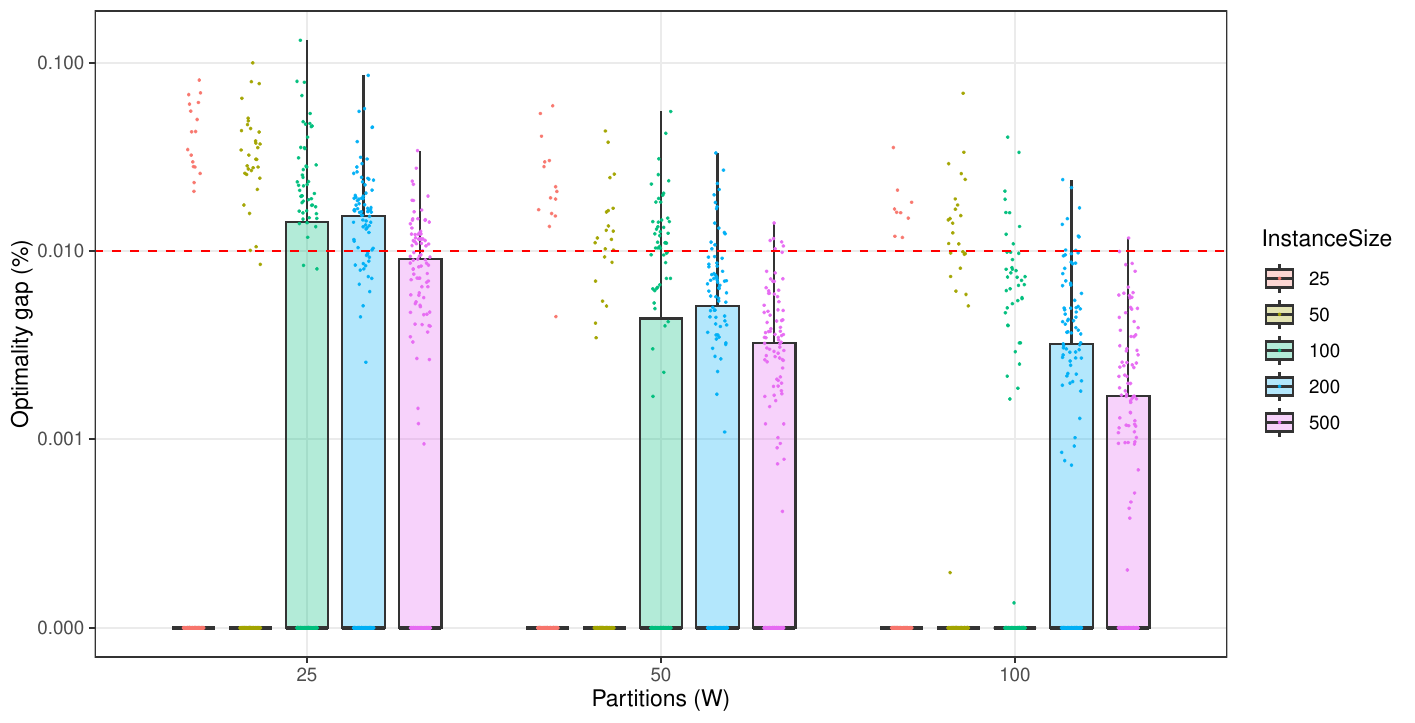}
\caption{Boxplot with jitter illustrating optimality gaps for PWL under normally distributed item weights and varying number of partitions ($W$); the dashed red line represents the target tolerance.}
\label{fig:pwl_rel_opt_gap}
\end{figure}

\newpage
PWL computational performance is stable across various instance types in the test bed (see tables in \ref{sec:pivot_tables_pwl}), with ISC instances being slightly more difficult to solve than others; average runtime increases from 7.04 seconds (25 items) to 22.0 seconds (500 items), and this increase is stable for varying $c_v$.
We observed 3 timeouts in total across the 900 instances. Over 95\% of the instances were solved to optimality (up to the prescribed relative tolerance); the maximum percent optimality gap observed was $6.8\cdot10^{-2}$ --- only slightly higher than the target percent relative tolerance $10^{-2}$.

\begin{figure}
\centering
\includegraphics[width=\textwidth, angle=0]{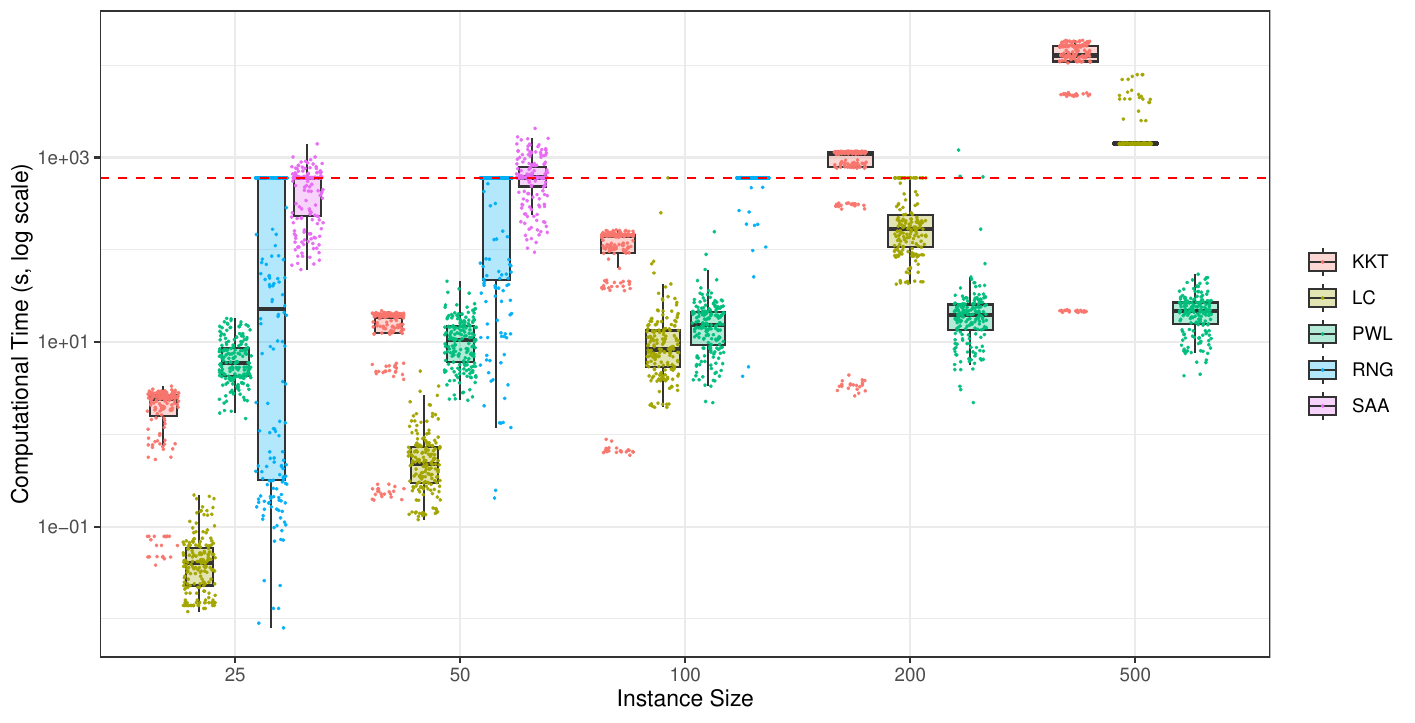}
\caption{Boxplot with jitter illustrating runtimes for PWL and the four baselines considered under normally distributed item weights; the dashed red line represents the time limit of 10 minutes; see \ref{sec:boxplots_KKT} for a discussion on KKT dot clustering.}
\label{fig:pwl_vs_baselines_normal}
\end{figure}
We  next compare PWL against the four baselines (PWL, SAA, LC and RNG) introduced in Section \ref{sec:baselines} for the 900 instances under normally distributed item weights.\footnote{Due to a known CPLEX issue, we could not strictly enforce the time limit. For large problems, CPLEX may exceed it since the limit applies only during optimisation---not during model construction, presolve, root relaxation, or heuristics. As a result, some solution times in our study surpass the set limit. While not officially documented, this behaviour is widely discussed on forums like \textit{IBM TechXchange}, \textit{Stack Overflow}, and \textit{OR Stack Exchange}. Given this limitation, we chose not to impose a time limit for KKT, allowing it to run to completion for all instances.} The comparison is illustrated in Figure~\ref{fig:pwl_vs_baselines_normal}.
Recall that our SAA baseline uses an LD‑driven scenario size: Phase~0 dynamically grows $ N $ until the LD bound is met, the time budget is exhausted, or $N$ reaches \(N_{\text{cap}}\). In our benchmark, even when considering only instances with 25 and 50 items, Phase~0 only successfully selected a suitable $N$ in 35\% of cases; when it did succeed, the median $ N $ selected was 1024; and when it failed, the median of the last $ N $ reached was 2048, while the respective median crude global bound $ \hat N_{\text{LD}} $ was $2\cdot10^7$ --- indeed a very conservative figure. This phase is followed by a confidence‑based replication loop whose aim is to certify optimality of a solution. In Phase~1, only 36.9\% of certified gaps crossed the target tolerance before hitting the wall clock or the maximum number of replications $ M_{\max} $ (wall clock hits: 225, $M_{\max}$ hits: 2), with the average percent gap for uncertified instances stabilising at $1.74\cdot 10^{-1}$, one order of magnitude above tolerance. Under our resource and tolerance regime, SAA with LD‑driven sizing is therefore non‑viable as a time‑to‑solution competitor.
Likewise, RNG perform poorly even for the smallest instances, and times out for most of the larger instances; this is the reason why in Figure~\ref{fig:pwl_vs_baselines_normal} we show SAA results only for instance sizes 25 and 50, and RNG results only for instance sizes 25, 50, 100. 
Both LC and KKT are very fast for small instances, but their computational time grows exponentially as the number of items increases. Conversely, PWL is slightly slower for small instances, but its computational time grows at a lower rate compared to other approaches with the number of items. For very large instances comprising 200 and 500 items, PWL is orders of magnitude faster \modified{in our computational study.}

Finally, we investigated the performance of a ``warm start'' approach in which a PWL model with a limited number of segments (i.e. $W=25$) is paired with LC of \cite{Fortz2012}. Results are presented in \ref{sec:pivot_tables_pwl} and show that the performance of this strategy does not substantially differ from LC. Our analysis suggests that the performance gap observed with respect to PWL is likely caused by the missing components of CPLEX (e.g. dynamic search, dual presolve) that are disabled to accommodate lazy-constraint callbacks; \modified{this is further confirmed by the fact that we observe a noticeble drop in performance if we disable these components for PWL (see Table \ref{tab:solution_time_sskp_PWL_ablation}).}

\newpage
\subsubsection{Performance of PWL under multivariate normal distributed item weights}\label{sec:results_PWL_multi_normal}

Once more, before comparing our PWL approach against established baselines in the literature, we illustrate its performance. In this section, we limit our study to instances under  {\em multivariate normal distributed item weights} comprising up to 100 items, as for larger item sizes all approaches time out for most instances.

An analysis similar to that carried out in the previous section (see \ref{sec:analysis_opt_gap_comp_time_mvn}) reveals that a linearisation setup $W=100$ and $s=0.1$ is also appropriate in this context. We solved 1080 instances (see tables in \ref{sec:pivot_tables_pwl}). PWL presents some degree of variability linked to instance types across the test bed: SC, ISC, ASC and USW instances being the hardest to solve; average runtime increases from 8.2 seconds (25 items) to 75.1 seconds (100 items), and this increase is stable for varying $c_v$ and $\rho$. We observed 11 timeouts. Around 94\% of the instances were solved to optimality (up to the prescribed relative tolerance). The average, median, and maximum percent optimality gap observed across the whole test bed were 0.001, 0.000, and 0.068, respectively. This demonstrates that PWL remains near optimal even when correlation among item weights is considered and modelled via a multivariate normal distribution. 

Next, we consider the performance of PWL against the following two baselines from the literature: SSA and the LC approach of \citep{Fortz2012}. The comparison is illustrated in Figure~\ref{fig:pwl_vs_baselines_multinormal}.

\begin{figure}
\centering
\includegraphics[width=0.95\textwidth, angle=0]{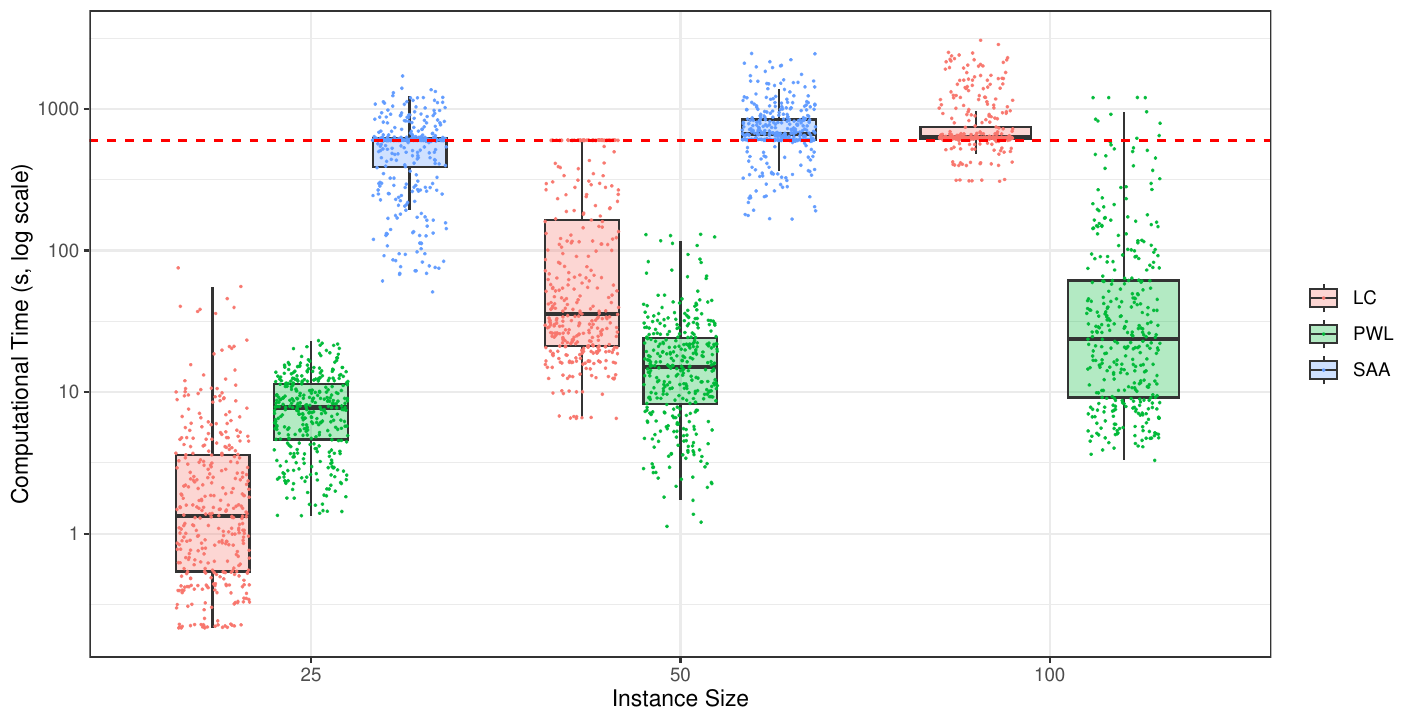}
\caption{Runtimes for PWL and the two baselines considered under multivariate normal distributed item weights; the dashed red line represents the time limit of 10 minutes.}
\label{fig:pwl_vs_baselines_multinormal}
\end{figure}

Once more, SAA performed poorly even for the smallest instances. LC is again very fast for small instances, but its computational time grows exponentially as the number of items increases. Conversely, PWL is slightly slower for small instances, but its computational time grows at a lower rate compared to other approaches as the number of items increases. For large instances comprising 100 items PWL scales better. To the best of our knowledge, these are the first computational results in the literature investigating the SSKP under multivariate normal demand.

\subsubsection{Performance of LC under item weights following a generic distribution}\label{sec:results_PWL_generic}

We consider the LC approach of \cite{Fortz2012}, which relies on lazy (global) cuts implemented in CPLEX. In their work, despite designing an algorithm that operates under generic distributions, the authors limited their computational study to the special case of normal weights, for which they generated --- as we did in the previous sections --- cuts by using efficient closed form expressions for the loss function \citep[see e.g.][Section C.3.2]{lawrencesnyder2011}. In this section, we generalised their approach and included suitable numerical integration strategies to generate cuts for instances comprising item weights that follow generic distributions, then we applied the approach to the 360 instances under gamma and lognormal distributed item weights ($n\in\{25,50\}$) from our test bed (see tables in \ref{sec:pivot_tables_lc}). The loss function is estimated via Monte Carlo simulation and Latin Hypercube Sampling (1000 replications) using Common Random Numbers. 
LC performance across the test bed varies noticeably depending on instance types, while it is stable for varying coefficients of variation; average runtime increases substantially with the size of an instance: from 262 seconds (25 items) to 450 seconds (50 items) for gamma item weights, and from 34.1 seconds (25 items) to 271 seconds (50 items) for lognormal item weights. Only 55.2\% (gamma) and 82.2\% (lognormal) of the instances could be solved to optimality.

As baseline, we consider SAA; the comparison is illustrated  in Figure~\ref{fig:pwl_vs_baselines_gamma} and Figure~\ref{fig:pwl_vs_baselines_lognormal}.
\begin{figure}[ht!]
\centering
\begin{minipage}{0.48\textwidth}
\centering
\includegraphics[width=\textwidth]{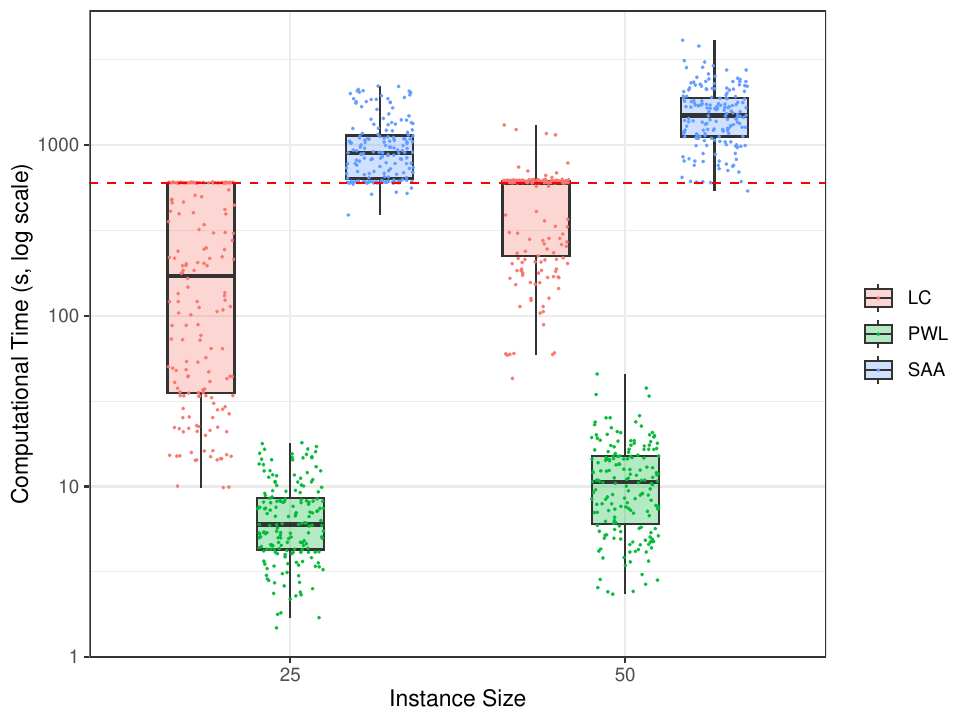}
\caption{Runtimes for LC and the SAA baseline considered under gamma distributed item weights; the dashed red line represents the time limit of 10 minutes.}
\label{fig:pwl_vs_baselines_gamma}
\end{minipage}
\hfill
\begin{minipage}{0.48\textwidth}
\centering
\includegraphics[width=\textwidth]{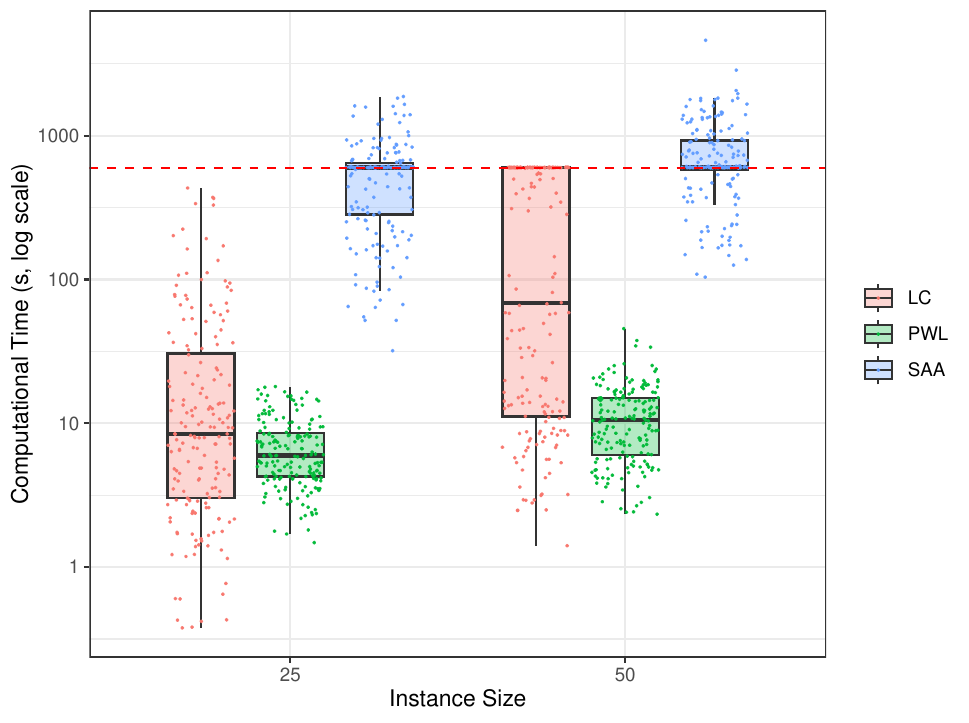}
\caption{Runtimes for LC and the SAA baseline considered under lognormal distributed item weights; the dashed red line represents the time limit of 10 minutes.}
\label{fig:pwl_vs_baselines_lognormal}
\end{minipage}
\end{figure}
SAA could only solve 2.5\% (gamma) and 38\% (lognormal) of the instances to optimality within the given time limit.

In the next section, since we limit our study to small 25-period instances, we adopt LC while investigating the performance of our receding horizon (RH) heuristic under item weights following a generic distribution, as this approach is exact and efficient enough for our purposes. However, computational results presented in this section suggest that a general purpose lazy constraint generation strategy paired with Monte Carlo simulation for computing the knapsack loss is not particularly scalable. 

\newpage
Unfortunately, if item weights follow generic distributions, it is not possible to rely on a closed form expression for computing the loss function, and one must therefore resort to numerical integration as suggested by \citep{Fortz2012}. In practice, however, if an instance comprises item weights following a generic distribution, as the instance size increases, the weight distribution of a knapsack will be approximately normal by the Central Limit Theorem (CLT) --- an argument that was similarly presented by \cite{Range2018} to motivate their heuristic. Because of this observation, a decision maker faced with such an instance may then decide to approximate item weights by their normal counterparts (e.g. via moment matching), and adopt our PWL approach, in lieu of LC or SAA, as an approximation strategy.

We carried out further experiments to investigate the effect of this approximation on the quality of the solutions obtained. More specifically, we solved instances via PWL by replacing the gamma (resp. lognormal) distribution with its normal counterpart featuring the same mean and standard deviation --- note that PWL computational times (reported in Figure~\ref{fig:pwl_vs_baselines_gamma} and Figure~\ref{fig:pwl_vs_baselines_lognormal}) remain those discussed in Section \ref{sec:results_PWL_normal} and are orders of magnitude smaller than those achieved by LC or SAA. Then we considered all instances in our test bed under gamma (199 instances) and lognormal (296 instances) distributions for which $n\in\{25,50\}$ and {\em for which LC did not time out}, and we compared the simulated value of the solution obtained by PWL against the value of the optimal solution obtained via LC --- recall that LC, when it does not time out, is exact up to chosen relative tolerance; its solution can be therefore used as reference to derive an optimality gap.
The average, median, maximum, and 95\% quantiles of the optimality gaps observed are tabulated in Table \ref{tab:optimality_gap_normal_approx_statistics}.
\begin{table}[ht!]
\centering
\begin{tabular}{l|rrrr}
 \toprule
 & Average & Median & Max & 95\% quantile\\
\midrule
gamma & 0.071 & 0.055 & 0.824 & 0.351\\
lognormal & 0.165 & 0.114 & 1.84 & 0.660\\
\bottomrule
\end{tabular}
\caption{Summary statistics for the average optimality gap (\%) of the PWL normal approximation heuristic to the SSKP under gamma and lognormal distributed item weights}
\label{tab:optimality_gap_normal_approx_statistics}
\end{table}

\noindent
The behaviour of the average optimality gap (\%) for different instance types is illustrated in Table \ref{tab:optimality_gap_normal_approx}. 
\begin{table}[ht!]
\centering
\begin{tabular}{l|rrrrrrrrr|rr}
 \toprule
 &\multicolumn{9}{|c|}{{\em instance types}}&\multicolumn{2}{|c}{$c_v$}\\
 & U & WC & SC & ISC & ASC & SS & USW & PC & C & 0.1 & 0.2 \\
\midrule
gamma & 0.05 & 0.00 & 0.11 & 0.31 & 0.02 & 0.38 & 0.08 & 0.26 & 0.04 & 0.06 & 0.09 \\
lognormal & 0.06 & 0.01 & 0.15 & 0.55 & 0.18 & 0.32 & 0.09 & 0.27 & 0.07 & 0.10 & 0.23 \\
\bottomrule
\end{tabular}
\caption{Average optimality gap (\%) of the PWL normal approximation heuristic to the SSKP under gamma and lognormal distributed item weights for different instance types}
\label{tab:optimality_gap_normal_approx}
\end{table}

While one may reasonably expect the CLT argument to apply only to instances comprising a substantial number of items, our results suggest instead that even for instances comprising a relatively small number of items ($n\in\{25,50\}$) approximating item weights by their normal counterparts represents an effective approximation strategy producing optimality gaps that generally do not exceed 0.5\%; and that our mathematical programming approach PWL therefore represents a computationally efficient heuristic for the SSKP with item weights following a generic distribution.

\subsection{Dynamic Stochastic Knapsack Problem}

In this section, we report computational results for the RH heuristics presented in Section \ref{sec:receding_horizon} to tackle the DSKP. This heuristic operates by iteratively solving $n$ SSKP problems of decreasing size, therefore its computational efficiency is linear in the efficiency of the model used to solve each of these problems. 

Since we have already extensively discussed the computational efficiency of our approaches to tackle the SSKP in the previous section, in this section we will concentrate on the optimality gaps of the RH heuristic. These optimality gaps are computed against the optimal dynamic policy obtained via stochastic dynamic programming. Since solving these instances to optimality via stochastic dynamic programming and estimating the performance of the RH heuristic are cumbersome tasks, we will limit our analysis to instances comprising 25 items. For the sake of comparison, we will also report optimality gaps of the static policy obtained via the SSKP model presented in Section \ref{sec:milp_sskp} when compared against the optimal dynamic policy. The performance of the RH heuristic and that of the static policy are estimated via Monte Carlo simulation.

For the 180 instances considered under normally distributed item weights, results are summarised via a boxplot with jitter in Figure~\ref{fig:optimality_gap_dskp_normal_boxplot}. The average optimality gap observed for the RH heuristics was 1.11\%. In comparison, the static policy obtained via the SSKP model presented in Section \ref{sec:normal} featured an average optimality gap of 3.80\%. 
\begin{figure}[ht!]
\centering
\begin{minipage}{0.48\textwidth}
\centering
\includegraphics[width=\textwidth, angle=0]{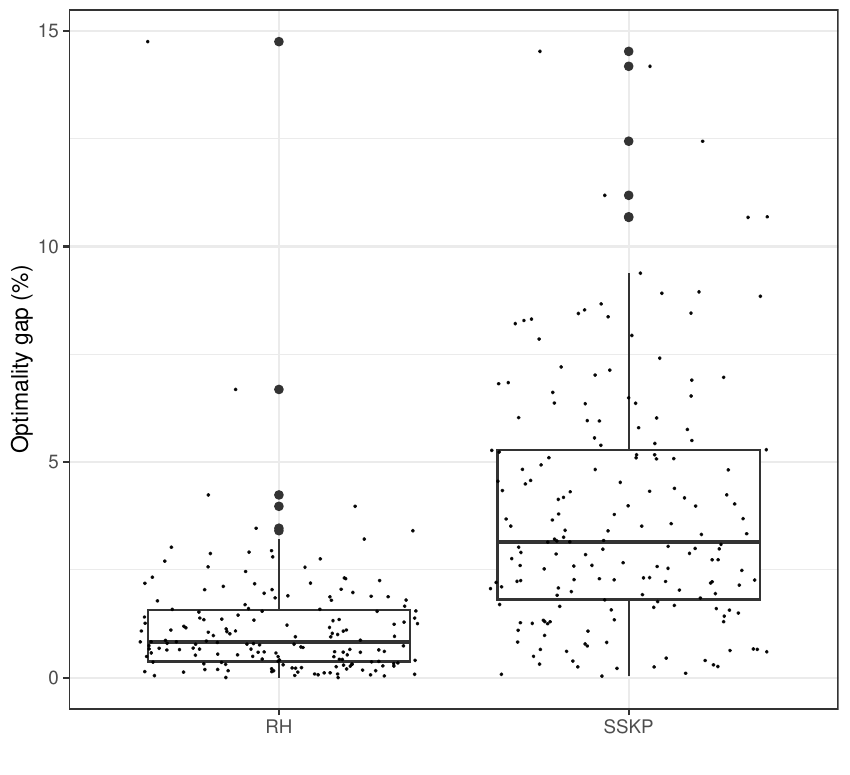}
\caption{Optimality gaps (\%) for the RH heuristics presented in Section \ref{sec:receding_horizon} and the static policy obtained via the SSKP model presented in Section \ref{sec:normal} under normally distributed item weights}
\label{fig:optimality_gap_dskp_normal_boxplot}
\end{minipage}
\hfill
\begin{minipage}{0.48\textwidth}
\centering
\includegraphics[width=\textwidth, angle=0]{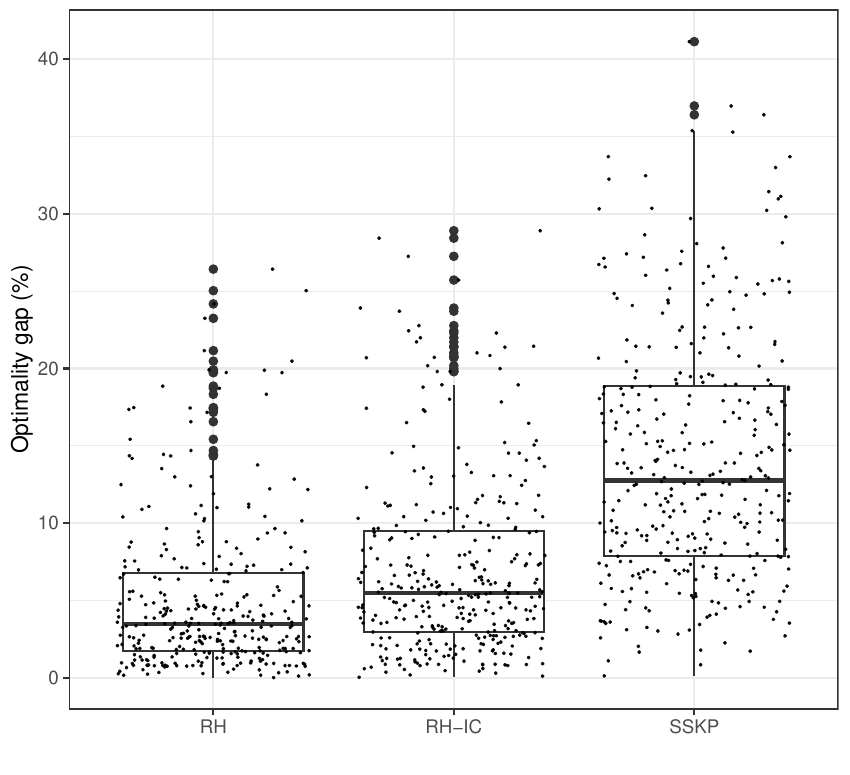}
\caption{Optimality gaps (\%) for the RH heuristic presented in Section \ref{sec:receding_horizon} and the static policy obtained via the SSKP model presented in Section \ref{sec:normal} under multivariate normal item weights; RH-IC denotes a receding
horizon heuristic in which the underpinning SSKP model ignores correlation among items.}
\label{fig:optimality_gap_dskp_multivariate_normal_boxplot}
\end{minipage}
\end{figure}

As discussed in Section \ref{sec:receding_horizon}, under a receding horizon framework, if weights follow a multivariate normal distribution, once the first $k$ item weights have been observed, we must update successive item weight distributions by using the conditional distribution. Since weights are correlated across multiple items, it is prohibitive to find the optimal solution using stochastic dynamic programming. We therefore impose a special structure $\sigma_{ij}^2=\rho^{|j-i|}\sigma_i\sigma_j$ to construct the covariance matrix. This ensures that the computation of the conditional mean only requires knowledge of the realised weight of the last item observed; in turn, this reduces the size of the state space that needs to be considered by stochastic dynamic programming. Nevertheless, even with this special structure in place, for the 360 instances considered under multivariate normal distributed item weights, it has been necessary to set the data range $R=10$ in order to generate instances that could be solved via stochastic dynamic programming in reasonable time. The average optimality gap observed for the RH heuristic was 4.49\%. In comparison, the static policy obtained via the SSKP model presented in Section \ref{sec:normal} featured an average optimality gap of 13.8\%. To illustrate the value of considering correlation among item weights while determining the optimal control policy, we also solved the test bed by using a receding horizon heuristic in which the underpinning SSKP model ignores correlation among items (RH-IC); for this heuristic the average optimality gap observed was 6.57\%. Results are summarised via a boxplot with jitter in Figure~\ref{fig:optimality_gap_dskp_multivariate_normal_boxplot}.
 
\begin{figure}[ht!]
\centering
\begin{minipage}{0.48\textwidth}
\centering
\includegraphics[width=\textwidth]{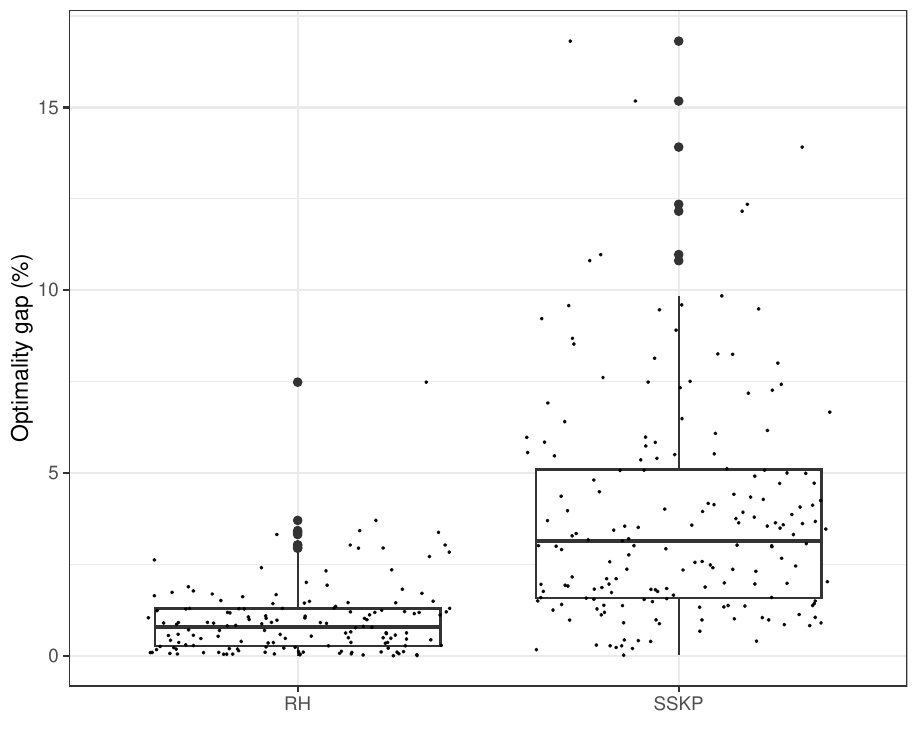}
\caption{Optimality gaps (\%) for the RH heuristics presented in Section~\ref{sec:receding_horizon} and the static policy obtained via the LC approach under gamma distributed item weights.}
\label{fig:optimality_gap_dskp_gamma_boxplot}
\end{minipage}
\hfill
\begin{minipage}{0.48\textwidth}
\centering
\includegraphics[width=\textwidth]{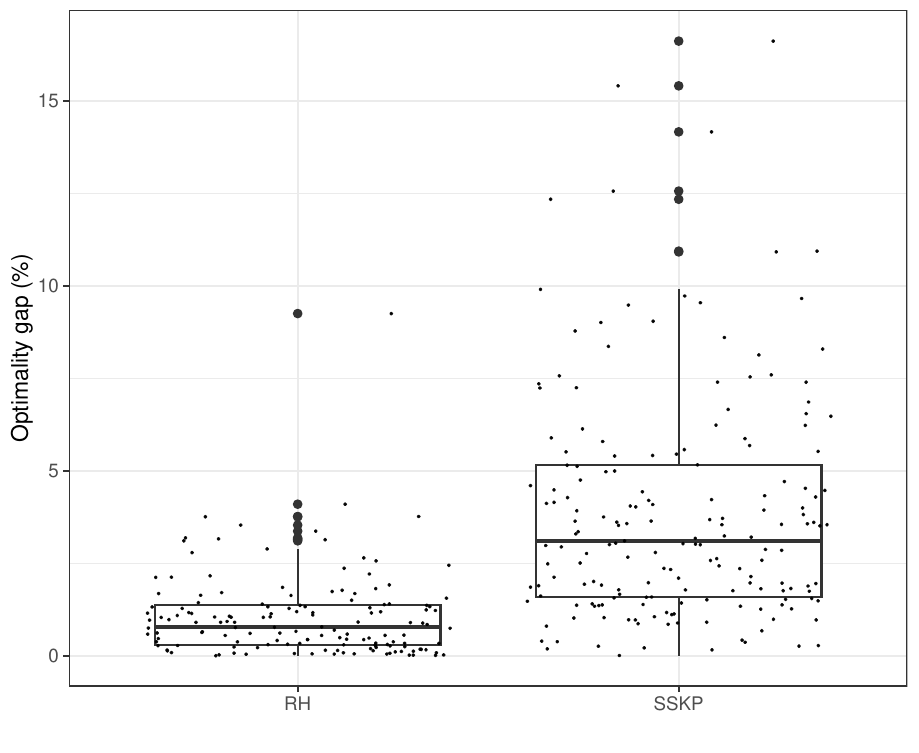}
\caption{Optimality gaps (\%) for the RH heuristics presented in Section~\ref{sec:receding_horizon} and the static policy obtained via the LC approach under lognormal distributed item weights.}
\label{fig:optimality_gap_dskp_lognormal_boxplot}
\end{minipage}
\end{figure}

For the 180 instances considered under gamma and lognormal distributed item weights, results are summarised via a boxplot with jitter in Figure~\ref{fig:optimality_gap_dskp_gamma_boxplot} and Figure~\ref{fig:optimality_gap_dskp_lognormal_boxplot}. The average optimality gap observed for the RH heuristics was 0.785\% (gamma) and 0.849\% (lognormal). In comparison, the static policy obtained via the LC approach of \cite{Fortz2012} featured an average optimality gap of 3.83\% (gamma) and 3.84\% (lognormal).

\modified{Finally, in \ref{sec:statistical_certificates_RH} we contrast the optimality gaps for the receding horizon heuristic computed against the optimal policy obtained via stochastic dynamic programming and those obtained by means of the statistical certificates produced by the approach in Section \ref{sec:rh_certificate} for instances under normally distributed item weights; as expected, this analysis reveals that these latter certificates are valid but typically conservative.}

\section{Conclusions}\label{sec:conclusions}

\modified{We presented mathematical programming reformulation and approximation strategies to tackle the SSKP under normally distributed item weights. In contrast to other competing approaches discussed in the literature, our approach extends to the case in which item weights are correlated and follow a multivariate normal distribution. 
We also investigated a receding horizon control framework to tackle the DSKP, in which items are considered sequentially, and a decision on an item inclusion may depend on previous item weight realisations. This latter framework can be used in conjunction with our mathematical programming approximations --- for (multivariate) normal item weights --- or with other existing approaches from the literature  \citep[e.g.][]{Kleywegt2002,Fortz2012} to tackle generic item weight distributions.}

In an extensive computational study, we compared our SSKP approach against four state-of-the-art baselines from the literature on a large benchmark comprising instances under (multivariate) normal item weights. 
We demonstrated that our solution method is near-optimal (up to a prescribed numerical tolerance) and that it is more scalable than these state-of-the-art baselines for very large instances. 

Motivated by the observation that, under generic item weight distributions, as an instance size increases the weight distribution of a knapsack is likely to be approximately normal by the Central Limit Theorem \citep{Range2018}, we also empirically demonstrated that approximating generic item weight distributions with normal ones yields near-optimal solutions with optimality gaps generally below $0.5\%$ in our benchmark study. In turn, this means our SSKP approach, despite being conceived for (multivariate) normal item weights, can be  deployed as a fast and effective heuristic under item weights following a generic distribution.

Finally, we investigated the optimality gap performance of the RH heuristics for the DSKP and found average optimality gaps of 1.1\% for normally distributed weights, 4.5\% for multivariate normal distributed weights, and around 0.8\% for both gamma and lognormal distributed weights. 
Under multivariate normal distributed weights the average optimality gap of RH heuristics increases to 6.6\% when correlation among item weights is ignored, this demonstrates the importance of considering item weight correlation.

While we have not considered the MMFE, or its underpinning time-series processes such as AR or MA, our models can be directly applied to these time-series processes, which are all built upon the multivariate normal distribution. Moreover, as in \citep{Merzifonluolu2011}, it is possible to extend our study and investigate variants of the problem in which capacity is random or can be purchased at a given cost per unit. 

Future research may further investigate the reasons underpinning low scalability of lazy cut generation-based approaches such as LC, and strategies for overcoming the performance gap created by the need to disable CPLEX features to enable lazy cut generation. \modified{Additionally, one may investigate the intrinsic strength of the formulations presented in this work by means of LP-relaxation analysis, integrality-gap comparisons, and polyhedral studies.} Finally, a recent study \citep{Jooken2022-yb} investigated structurally hard deterministic knapsack instances that take a long time to solve despite being small; an interesting direction of enquiry is to establish what makes a SSKP instance 
structurally hard, and to contrast these characteristics with those of its deterministic counterpart. 
\clearpage

\appendix

\section{Piecewise linearisation of the square root function}\label{sec:pw_sqrt}

Consider the square root function $\sqrt{x}$; within interval $[0,B]$, where $B$ is an arbitrary upper bound, we aim to build piecewise linear lower and upper bounds for this function.
We partition the interval $[0,B]$ by using a step size $s$ and we build one linear segment for each step. 

A piecewise linear lower bound is trivially constructed by taking breakpoints $\{0,s,2s,\ldots,B\}$ and slopes $\{1/\sqrt{s},\ldots,(\sqrt{b_i}-\sqrt{b_{i-1}})/s,\ldots\}$, where $b_i$ is the $i$th breakpoint. 

To construct a piecewise linear upper bound, we must shift up the piecewise linear lower bound by an amount $x_0$, so that one of the segments become tangent to the square root curve, and all other segments are above it. To determine $x_0$, recall that 
\[
\frac{d}{dx} \sqrt{x} = \frac{1}{2\sqrt{x}},
\]
and observe that this derivative monotonically decreases as $x\rightarrow \infty$; $x_0$ is therefore equal to the maximum distance between the first linear segment and the original function. To determine $x_0$, we solve for $x$
\[\frac{d}{dx} \left(\sqrt{x}-x\frac{\sqrt{s}-\sqrt{0}}{s}\right) = 0\]
and obtain $x_0=\sqrt{s}/4$, which is the amount one needs to shift up the piecewise linear lower bound to obtain the piecewise linear upper bound (Fig. \ref{fig:ub_lb_sqrt}).
\begin{figure}
\centering
\resizebox{0.3\textwidth}{!}{
\begin{tikzpicture}
    \begin{axis}[
        domain=0:10,
        samples=100,
        xlabel={$x$},
        grid=major,
        legend style={at={(1,0.3)},anchor=east},
    ]
    \addplot[thin] {sqrt(x)};
    \addlegendentry{$\sqrt{x}$}

    \addplot[dashdotted] coordinates {
        (0,0) (1,1) (2,1.414) (3,1.732) (4,2) (5,2.236) (6,2.449) (7,2.646) (8,2.828) (9,3) (10,3.162)
    };
    \addlegendentry{LB}
    
    \addplot[dashed] coordinates {
        (0,0+1/4) (1,1+1/4) (2,1.414+1/4) (3,1.732+1/4) (4,2+1/4) (5,2.236+1/4) (6,2.449+1/4) (7,2.646+1/4) (8,2.828+1/4) (9,3+1/4) (10,3.162+1/4)
    };
    \addlegendentry{UB}
    \end{axis}
\end{tikzpicture}
}
\caption{Piecewise linear lower and upper bounds for $\sqrt{x}$.}
\label{fig:ub_lb_sqrt}
\end{figure}
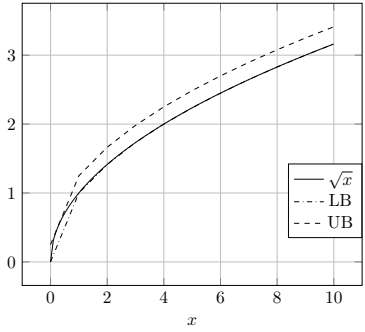
We implemented this constraint as shown in Fig. \ref{fig:piecewise_sqrt}. 
\begin{figure}
\begin{framed}
\begin{minipage}{\textwidth}
\scriptsize
\begin{verbatim}
dvar float+ V;   // knapsack weight variance
dvar float+ S;   // knapsack weight standard deviation
float+ s = ...;  // linearisation step
float+ x0 = ...; // value of the function at 0
range breakpoints = 1..Q/s; 
float slopes[i in breakpoints] = (sqrt(i*s) - sqrt((i-1)*s))/s; 
S == piecewise(b in breakpoints){slopes[b] -> b*s; 0}(0, x0)(V); 
\end{verbatim}
\end{minipage}
\end{framed}
\caption{Piecewise linear upper bound of function \texttt{sqrt} in IBM ILOG OPL, where $\texttt{Q}=\mathbf{1}'_n\sigma^2$, or $\texttt{Q}=\mathbf{1}'_n\Sigma \mathbf{1}_n$ in the multivariate normal case.}
\label{fig:piecewise_sqrt}
\end{figure}

\section{Proof of Lemma \ref{lemma:Amax}}\label{sec:lemma_Amax_proof}
Let $-\infty = z_0 < z_1 < \cdots < z_W = +\infty$ be the breakpoints of partition \( \{\Omega_k\}_{k=1}^W \), which is assumed to be contiguous and ordered on \( \mathbb{R} \);
then, \( \cup_{k=i}^W \Omega_k = [z_{i-1},\infty) \). Hence, with \( \varphi \) the standard normal pdf,
\[
A_i = \int_{z_{i-1}}^\infty z\,\varphi(z)\,dz
= \big[-\varphi(z)\big]_{z=z_{i-1}}^{z=\infty}
= \varphi(z_{i-1}),
\]
where we used \( \varphi'(z) = -z\,\varphi(z) \). Therefore
\[
A_{\max} = \max_{i=1,\dots,W} A_i
= \max_{i=1,\dots,W} \varphi(z_{i-1}),
\]
so \( A_{\max} \) is attained at the breakpoint \( z_{i-1} \) that is closest to \( 0 \), because \( \varphi \) is symmetric, strictly decreasing on \( [0,\infty) \), and maximized at \( 0 \).

If \( W \) is even and the partition is symmetric about \( 0 \) (as in the standard Jensen partitions used for the standard normal), then \( 0 \) is itself a breakpoint, so
\[
A_{\max} = \varphi(0) = \frac{1}{\sqrt{2\pi}} \approx 0.398942.
\]

If \( W \) is odd under a symmetric partition, then the two breakpoints closest to \( 0 \) are at \( \pm t_W \) with \( t_W > 0 \), hence
\[
A_{\max} = \varphi(t_W) < \varphi(0),
\]
but for the equal-probability (Jensen) partition one has \( t_W = \Phi^{-1}\!\big(\tfrac{m}{W}\big) \) with \( W = 2m+1 \), so \( t_W \to 0 \) as \( W \to \infty \), giving \( A_{\max} \uparrow \varphi(0) \).

This proves the claim. \hfill $\Box$

\section{Convergence of piecewise linear bounds for the first order loss function}\label{sec:convergence_pwl}

Let \(\mathcal{L}(y,\omega)\) be the first order loss of a standard
normal random variable \(\omega\) and let
\(\mathcal{L}_{\textup{lb}}(y,\omega)\) be its Jensen lower approximation obtained
from \(W\) partitions.  Define the pointwise error
\[
   e_W(y)\;\triangleq\;
     \mathcal{L}(y,\omega)-\mathcal{L}_{\textup{lb}}(y,\omega).
\]
We show that
\begin{equation}\label{eq:supErr}
  \sup_{y\in\mathbb{R}} e_W(y)
  \;=\;O\!\bigl(W^{-2}\bigr)
  \qquad\text{as } W\to\infty.
\end{equation}

Since \(\omega\sim\mathcal N(0,1)\),
\(\mathcal{L}(y,\omega)\) is twice
continuously differentiable with
\(\mathcal{L}''(y)=\varphi(y)\), where
\(\varphi(y)=\tfrac1{\sqrt{2\pi}}e^{-y^{2}/2}\)
is the standard normal density.  Hence
\(0<\mathcal{L}''(y)\le 1/\sqrt{2\pi}\) for all \(y\in\mathbb R\).

Let \(\Phi\) and \(\Phi^{-1}\) denote the standard normal cdf and
quantile function.  Partition the real line into \(W\) intervals of
equal probability
\[
   \Omega_i=[z_{i-1},z_i],
   \qquad
   z_i=\Phi^{-1}\!\bigl(i/W\bigr),
   \qquad
   i=0,\dots,W.
\]
For each interval set
\(\mu_i=\mbox{E}[\omega\mid\Omega_i]\) and
\(\Delta_i=\mu_{i+1}-\mu_i\).
Consider an arbitrary value \(\eta\) with \(0<\eta<\tfrac12\).
Because \(\Phi^{-1}\) is continuously differentiable on
\([\eta,1-\eta]\) with bounded derivative
\[
   0<\frac{d}{dp}\Phi^{-1}(p)=\frac{1}{\varphi(\Phi^{-1}(p))}
      \le \frac{1}{\varphi(\Phi^{-1}(\eta))}\;,
\]
there exists \(C_\eta>0\) such that
\(\lvert z_{i+1}-z_i\rvert\le C_\eta/W\) whenever
\(i/W\in[\eta,1-\eta]\).
Since \(\mu_i\in[z_{i-1},z_i]\), the same bound holds for the spacing
of conditional means:
\[
   \Delta\;\triangleq\;\max_{0\le i\le W-1}\Delta_i
   \;=\;O(W^{-1}).
\]
Note that the \(o(1/W)\) behaviour in the two tails
\(p<\eta\) or \(p>1-\eta\) is immaterial for the global rate.

Fix \(y\in[\mu_i,\mu_{i+1}]\).
By Taylor’s theorem there exists \(\xi\in[\mu_i,\mu_{i+1}]\)
such that
\[
   \mathcal{L}(y,\omega)
   \;=\;
   \mathcal{L}_{\textup{lb}}(y,\omega)
   \;+\;
   \frac12\,\mathcal{L}''(\xi)\,(y-\mu_i)^2
   +O\!\bigl((y-\mu_i)^3\bigr).
\]
Using \(\mathcal{L}''(\xi)=\varphi(\xi)\le 1/\sqrt{2\pi}\) and
\(\lvert y-\mu_i\rvert\le\Delta\) we obtain
\(e_W(y)\le(2\sqrt{2\pi})^{-1}\Delta^{2}\).

Since \(\Delta=O(W^{-1})\), the right-hand side is
\(O(W^{-2})\), and \eqref{eq:supErr} follows. 

\section{Numerical example of Section 5.1.2}\label{sec:numerical_example}
We consider the following SSKP instance: item weights are normally distributed with means $\bm{\mu}'$ and standard deviations $\bm{\sigma}'=c_v\,\bm{\mu}'$, where $c_v$ is a constant coefficient of variation; expected item values ($\mbox{E}[\bm{\varphi}']$) and mean item weights are shown in Table \ref{tab:data_instance}; finally, $c=10$; and $C=100$. In Table \ref{tab:cv_epsilon_wq_gap} we report values of $W$ and $Q$ for varying $c_v\in\{0.1,0.2,0.3\}$ and $\varepsilon\in\{10,1,0.1\}$. Both $W$ and $Q$ grow in line with the asymptotic analysis presented.
\begin{table}[ht]
\scriptsize
\centering
\begin{tabular}{c|cccccccccc}
Item $i$                & 1   & 2   & 3  & 4   & 5   & 6  & 7 & 8   & 9   & 10 \\ 
\midrule
$\operatorname{E}[\varphi_i]$ & 111 & 111 & 21 & 117 & 123 & 34 & 3 & 121 & 112 & 12 \\
$\mu_i$                       &  44 &  42 & 73 &  15 &  71 & 12 &13 &  14 &  23 & 15 \\
\end{tabular}
\caption{Expected item values and mean item weights for the test
instance}
\label{tab:data_instance}
\end{table}
\vspace{-1em}
\begin{table}[ht]
\scriptsize
\centering
\begin{tabular}{cc|ccc|ccc|ccc|}
\multicolumn{2}{c}{} & \multicolumn{9}{c}{$c_v$} \\
\multicolumn{2}{c}{} & \multicolumn{3}{c}{0.1} & \multicolumn{3}{c}{0.2} & \multicolumn{3}{c}{0.3} \\
\cline{3-11}
\multicolumn{2}{c|}{} & $W$ & $Q$ & gap & $W$ & $Q$ & gap & $W$ & $Q$ & gap \\
\cline{3-11}
\multirow{3}{*}{$\varepsilon$} 
& 10   & 5 & 8 & 5.08 & 7 & 27 & 5.87 & 8 & 56 & 6.81 \\
& 1    & 14 & 620 & 0.48 & 19 & 2500 & 0.44 & 23 & 5607 & 0.67 \\
& 0.1  & 41 & 62023 & 0.04 & 61 & 247859 & 0.07  & 90 & 557209 & 0.06  \\
\cline{3-11}
\end{tabular}
\caption{Table showing values of $W$, $Q$, and gap for combinations of $c_v$ and $\varepsilon$, where gap is defined as $Z^{\mathrm U}(W,Q)-Z^{\mathrm L}(W,Q)$}
\label{tab:cv_epsilon_wq_gap}
\end{table}

\section{SAA implementation}\label{sec:saa_implementation}
\vspace{-0.5em}
In this section, we assume the reader is familiar with the discussion and notation in \cite{Kleywegt2002}.
Let $S \subseteq \{0,1\}^n$ be the full decision space of candidate solutions (knapsack selections). With $N$ denoting the number of scenarios per SAA replication, we seek to include the SAA $\delta$-optimal set $\hat{S}_N^\delta$ in the true $\varepsilon$-optimal set $S^\varepsilon$ with confidence $1-\alpha$. By \citeauthor{Kleywegt2002}'s (2.16), this is ensured if $|S\setminus S^\varepsilon|\,e^{-N\gamma(\delta,\varepsilon)} \le \alpha$, where $\gamma(\delta,\varepsilon)$ is the (worst-case) LD rate for misclassification errors. Since $S^\varepsilon$ is unknown, we use the standard bound $|S\setminus S^\varepsilon|\le|S|\le 2^n$, which yields the sufficient condition $N \;\ge\; \frac{1}{\hat{\gamma}(\delta,\varepsilon)} \log\frac{|S|}{\alpha}$, cf.\ (2.22). 
We approximate the LD rate by the quadratic bound (2.20) and use the plug‑in estimator $\hat{\gamma}(\delta,\varepsilon) \;=\; \frac{(\varepsilon-\delta)^2}{3\,\hat{\sigma}_{\max}^2}$, where $\hat{\sigma}_{\max}^2$, c.f. (2.21), is the largest sample variance --- across all stored replications and candidate solutions compared to the current incumbent $\hat{x}$ --- of the per‑scenario difference in objective value computed within each replication's scenario set. Whenever the incumbent changes, $\hat{\sigma}_{\max}^2$ is recomputed against all stored replications. 
At each \(N\), we solve replications until we obtain at least one nonzero difference-variance to estimate \(\hat\sigma^2_{\max}\). We start from \(N_0=64\), double \(N\) until the LD condition is met, a wall-clock limit $T_{\text{lim}}$ is hit, or \(N\) reaches a cap \(N_{\text{cap}}\) determined as the value of $N$ for which solving one SAA replication takes longer than $T_{\text{lim}}/33$ --- the rationale for the denominator is that we need enough time to run ``warm-up'' replications during the certification phase. When increasing or capping \(N\), we reset and discard prior replications to recompute \(\hat{\sigma}^2_{\max}\) at the new \(N\).
Once $N$ is selected, we run SAA replications as follows: we first collect 32 ``warm-up'' replications to stabilise the gap estimates, then add replications one by one until the solution is certified, the time limit is reached, or we hit $M=1000$ replications. We set $\delta=0$, $\alpha=0.05$, and $\varepsilon=10^{-4}$. 
Certification is based on the first CLT-based optimality gap estimator $\hat g_{N'} (\hat x) - \bar v_N^M$  of \cite[][Section 3.3]{Kleywegt2002}. Each incumbent is evaluated out-of-sample with $N'=10^5$ Monte Carlo draws; the reported SAA runtime includes all SAA solves plus this out-of-sample evaluation.

\section{Performance of PWL}\label{sec:pivot_tables_pwl}
\vspace{-0.5em}
\paragraph{Normally distributed item weights}
A pivot table reporting average solution times for different instance sizes, instance types, and coefficients of variation considered is presented in Table \ref{tab:solution_time_sskp_normal_pwl}. 
\modified{In Table \ref{tab:solution_time_sskp_PWL_ablation} we illustrate average solution times when multithreading, dual presolve, and dynamic search are disabled.}
In Table \ref{tab:solution_time_sskp_LC_vs_LC_WS} we contrast the performance of LC and LC with warm start.
\vspace{-0.5em}
\paragraph{Multivariate normal distributed item weights} A pivot table reporting average solution times for different instance sizes, instance types, coefficients of variation, and correlation coefficients is presented in Table \ref{tab:solution_time_sskp_multinormal_pwl}.

\begin{table}[ht]
\centering
\begin{minipage}{0.48\textwidth}
\centering
\begin{tabular}{lllllll}
\toprule
 & $n$  & 25 & 50 & 100 & 200 & 500 \\
\midrule
\multirow{9}{*}{\rotatebox[origin=c]{90}{\em instance types}} & U & 2.74 & 3.79 & 5.42 & 6.06 & 9.36 \\
 & WC & 5.16 & 7.57 & 9.53 & 14.4 & 17.5 \\
 & SC & 6.34 & 18.0 & 35.4 & 35.4 & 26.5 \\
 & ISC & 8.53 & 11.2 & 19.2 & 108 & 24.4 \\
 & ASC & 7.46 & 17.0 & 22.6 & 25.1 & 25.8 \\
 & SS & 11.2 & 14.6 & 21.0 & 25.9 & 24.0 \\
 & USW & 5.53 & 7.71 & 11.9 & 16.2 & 29.2 \\
 & PC & 11.0 & 13.2 & 16.8 & 19.8 & 20.7 \\
 & C & 5.29 & 10.6 & 14.9 & 20.0 & 20.5 \\
\midrule
\multirow{2}{*}{\rotatebox[origin=c]{90}{$c_v$}} 
& 0.1 & 7.43 & 12.3 & 16.8 & 25.5 & 20.7 \\
 & 0.2 & 6.65 & 10.7 & 18.1 & 34.7 & 23.2 \\
\midrule
 &  & 7.04 & 11.5 & 17.4 & 30.1 & 22.0 \\
\bottomrule
\end{tabular}
\caption{Average solution time (s) of PWL for normally distributed item weights}
\label{tab:solution_time_sskp_normal_pwl}
\end{minipage}
\hfill
\begin{minipage}{0.48\textwidth}
\centering
\begin{tabular}{lllllll}
\toprule
 &  &  & 25 & 50 & 100 \\
\midrule
\multirow{9}{*}{\rotatebox[origin=c]{90}{\em instance types}} & U &  & 2.21 & 5.00 & 11.5 \\
 & WC &  & 3.87 & 5.79 & 11.6 \\
 & SC &  & 8.15 & 18.4 & 134 \\
 & ISC &  & 9.41 & 19.0 & 138 \\
 & ASC &  & 10.5 & 25.8 & 123 \\
 & SS &  & 14.1 & 32.6 & 46.7 \\
 & USW &  & 8.35 & 37.3 & 137 \\
 & PC &  & 10.2 & 20.5 & 25.9 \\
 & C &  & 6.79 & 14.1 & 46.2 \\
\midrule
\multirow{2}{*}{\rotatebox[origin=c]{90}{$c_v$}} & 0.1 &  & 7.43 & 16.9 & 59.3 \\
 & 0.2 &  & 8.96 & 22.7 & 91.0 \\
 \midrule
\multirow{2}{*}{\rotatebox[origin=c]{90}{$\rho$}} & 0.75 &  & 7.50 & 13.1 & 64.4 \\
& 0.95 &  & 8.90 & 26.5 & 85.9 \\
\midrule
& &  & 8.20 & 19.8 & 75.1 \\
\bottomrule
\end{tabular}
\caption{Average solution time (s) of PWL for multivariate normal distributed item weights}
\label{tab:solution_time_sskp_multinormal_pwl}
\end{minipage}
\end{table}

\begin{table}[ht]
\centering
\begin{minipage}{0.38\textwidth}
\centering
\modified{
\begin{tabular}{lllllll}
\toprule
 & $n$  & 25 & 50 \\
\midrule
\multirow{6}{*}{\rotatebox[origin=c]{90}{\em instance types}} & U & 11.1 & 105  \\
 & WC & 15.7 & 136 \\
 & SC & 64.6 & 439 \\
 & ISC & 220 & 439 \\
 & ASC & 94.2 & 472 \\
 & SS & 84.1 & 416 \\
 & USW & 160 & 392 \\
 & PC & 64.4 & 327 \\
 & C & 22.0 & 240 \\
\midrule
\multirow{2}{*}{\rotatebox[origin=c]{90}{$c_v$}} & 0.1 & 18.5 & 197 \\
 & 0.2 & 145 & 462 \\
\midrule
 &  & 81.9 & 330 \\
\bottomrule
\end{tabular}
\caption{Average solution time (s) of PWL for normally distributed item weights when multithreading, dual presolve, and dynamic search are disabled}
\label{tab:solution_time_sskp_PWL_ablation}
}
\end{minipage}
\hfill
\begin{minipage}{0.58\textwidth}
\centering
\resizebox{\textwidth}{!}{
\begin{tabular}{lllllllllllll}
\toprule
 & $n$ & \multicolumn{2}{l}{25} & \multicolumn{2}{l}{50} & \multicolumn{2}{l}{100} & \multicolumn{2}{l}{200} & \multicolumn{2}{l}{500} \\
 & Method & LC & WS & LC & WS & LC & WS & LC & WS & LC & WS \\
\midrule
\multirow{9}{*}{\rotatebox[origin=c]{90}{\em instance types}} 
 & U & 0.10 & 0.46 & 0.41 & 0.53 & 5.84 & 3.57 & 149 & 70.8 & 600 & 600 \\
 & WC & 0.02 & 0.51 & 0.28 & 1.14 & 5.04 & 8.84 & 104 & 138 & 600 & 600 \\
 & SC & 0.04 & 0.75 & 1.14 & 9.95 & 28.1 & 163 & 372 & 452 & 600 & 600 \\
 & ISC & 0.08 & 0.70 & 0.97 & 2.38 & 42.9 & 168 & 437 & 371 & 600 & 600 \\
 & ASC & 0.04 & 1.51 & 0.69 & 7.43 & 17.6 & 94.9 & 263 & 315 & 600 & 600 \\
 & SS & 0.06 & 1.07 & 0.72 & 2.99 & 17.5 & 18.2 & 285 & 241 & 600 & 600 \\
 & USW & 0.02 & 0.55 & 0.29 & 0.97 & 7.94 & 8.79 & 155 & 86.3 & 600 & 600 \\
 & PC & 0.04 & 1.73 & 0.66 & 1.82 & 10.8 & 10.8 & 181 & 135 & 600 & 600 \\
 & C & 0.04 & 0.62 & 0.51 & 6.03 & 4.98 & 19.3 & 110 & 170 & 600 & 600 \\
\midrule
\multirow{2}{*}{\rotatebox[origin=c]{90}{$c_v$}} 
& 0.1 & 0.06 & 1.00 & 0.69 & 5.07 & 19.9 & 42.1 & 225 & 226 & 600 & 600 \\
 & 0.2 & 0.04 & 0.76 & 0.57 & 2.32 & 11.4 & 68.2 & 231 & 214 & 600 & 600 \\
\bottomrule
\end{tabular}
}
\caption{Average solution time (s) for LC and LC with warm start (WS) using 25 piecewise linearisation segments under normally distributed item weights}
\label{tab:solution_time_sskp_LC_vs_LC_WS}
\end{minipage}
\end{table}

\section{Performance of KKT}\label{sec:boxplots_KKT}
\vspace{-0.5em}
In Figure~\ref{fig:pwl_vs_baselines_normal}, KKT results appear to cluster around certain runtimes. We further investigate this matter in Figure~\ref{fig:kkt_runtimes}. KKT runtimes are almost entirely determined by the instance ``type'' considered (U, WC, SC, ISC, ASC, SS, USW, PC, C) and by the number of items; when these two factors are fixed, runtime variability is minimal among instances. Moreover, SS instances --- and to a certain extent USW instances --- appear to be easier than others for KKT; conversely, all remaining instance types are equally difficult.

\begin{figure}
\centering
\includegraphics[width=0.95\textwidth, angle=0]{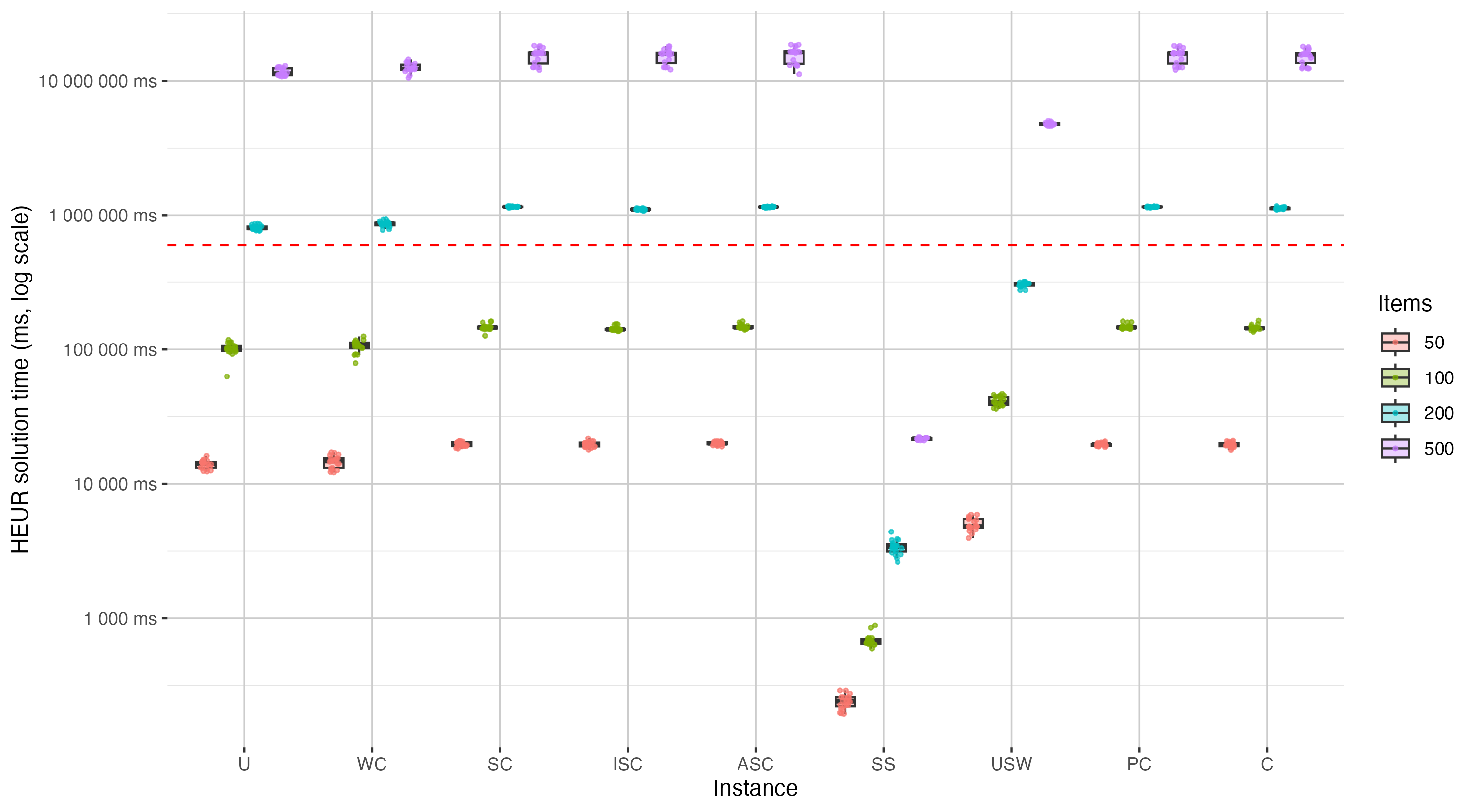}
\caption{Runtimes for KKT by instance type and item number; the dashed red line represents the time limit of 10 minutes.}
\label{fig:kkt_runtimes}
\end{figure}

Note that this is not an artefact of our benchmark; it is a direct consequence of how the KKT-based heuristic enumerates its candidate service-level breakpoints $ z $ and thresholds. Following notation from \cite[][Sections 3.2 and 3.3]{Merzifonluolu2011}, for a fixed item count $ n $ and instance ``type,'' the algorithm ends up generating (almost) a fixed number of $ z $-values and threshold positions, so the total amount of work is nearly constant --- hence the tight runtime clusters. Two types stand out. \textbf{SS (subset-sum)} is especially fast because all pairwise equalities $ \rho_i(z)=\rho_j(z) $ collapse to the same $ z $, so $ \lvert \mathcal{Z} \rvert $ is $ O(1) $ rather than $ \Theta(n^2) $. \textbf{USW (uncorrelated with similar weights)} is also faster because many pairwise candidates are discarded --- near-degenerate denominators or ratios outside $ (0,1) $ --- so $ \lvert \mathcal{Z} \rvert $ is small.
Everything else produces many valid, distinct $ z(i,j) $, so $ \lvert \mathcal{Z} \rvert \approx \Theta(n^2) $. Since the code generates $ \approx n $ candidates per $ z $, runtime scales like $ \Theta(n\cdot \lvert \mathcal{Z} \rvert)\approx \Theta(n^3) $ with a type-dependent constant, giving those horizontal clusters per $ n $ / type.

\section{Performance of LC}\label{sec:pivot_tables_lc}

\paragraph{Item weights following a generic distribution}
Pivot tables reporting average solution times for different instance sizes, instance types, and coefficients of variation considered are presented in Table \ref{tab:solution_time_sskp_gamma_LC} (gamma distributed item weights) and Table \ref{tab:solution_time_sskp_lognormal_LC} (lognormal distributed item weights). 

\begin{table}[ht]
\centering
\begin{minipage}{0.48\textwidth}
\centering
\begin{tabular}{lllllll}
\toprule
 & $n$  &  & 25 & 50 \\
\midrule
\multirow{9}{*}{\rotatebox[origin=c]{90}{\em instance types}} 
 & U &  & 29.9 & 234 \\
 & WC &  & 32.5 & 212 \\
 & SC &  & 279 & 586 \\
 & ISC &  & 428 & 583 \\
 & ASC &  & 298 & 601 \\
 & SS &  & 594 & 600 \\
 & USW &  & 50.3 & 193 \\
 & PC &  & 552 & 600 \\
 & C &  & 94.9 & 444 \\
\midrule
\multirow{2}{*}{\rotatebox[origin=c]{90}{$c_v$}} 
& 0.1 &  & 247 & 436 \\
 & 0.2 &  & 276 & 465 \\
\midrule
 & &  & 262 & 450 \\
\bottomrule
\end{tabular}
\caption{Average solution time (s) of LC for gamma distributed item weights}
\label{tab:solution_time_sskp_gamma_LC}
\end{minipage}
\hfill
\begin{minipage}{0.48\textwidth}
\centering
\begin{tabular}{lllllll}
\toprule
 & $n$  &  & 25 & 50 \\
\midrule
\multirow{9}{*}{\rotatebox[origin=c]{90}{\em instance types}} 
 & U &  & 2.00 & 9.48 \\
 & WC &  & 3.03 & 10 \\
 & SC &  & 12.2 & 376 \\
 & ISC &  & 51.2 & 415 \\
 & ASC &  & 30.2 & 417 \\
 & SS &  & 141 & 585 \\
 & USW &  & 5.67 & 12.0 \\
 & PC &  & 53.3 & 536 \\
 & C &  & 7.26 & 83.3 \\
\midrule
\multirow{2}{*}{\rotatebox[origin=c]{90}{$c_v$}} 
 & 0.1 &  & 27.5 & 245 \\
 & 0.2 &  & 40.6 & 297 \\
\midrule
& & & 34.1 & 271 \\
\bottomrule
\end{tabular}
\caption{Average solution time (s) of LC for lognormal distributed item weights}
\label{tab:solution_time_sskp_lognormal_LC}
\end{minipage}
\end{table}

\section{Computational time and optimality gap analysis for PWL under multivariate normal distributed item weights}\label{sec:analysis_opt_gap_comp_time_mvn}
For the 1080 instances considered under {\em normally distributed item weights} computational times and optimality gaps for varying $W$ are shown in Figure~\ref{fig:pwl_comp_time_segs_mvn} and Figure~\ref{fig:pwl_rel_opt_gap_mvn}, respectively.

\begin{figure}
\centering
\includegraphics[width=\textwidth, angle=0]{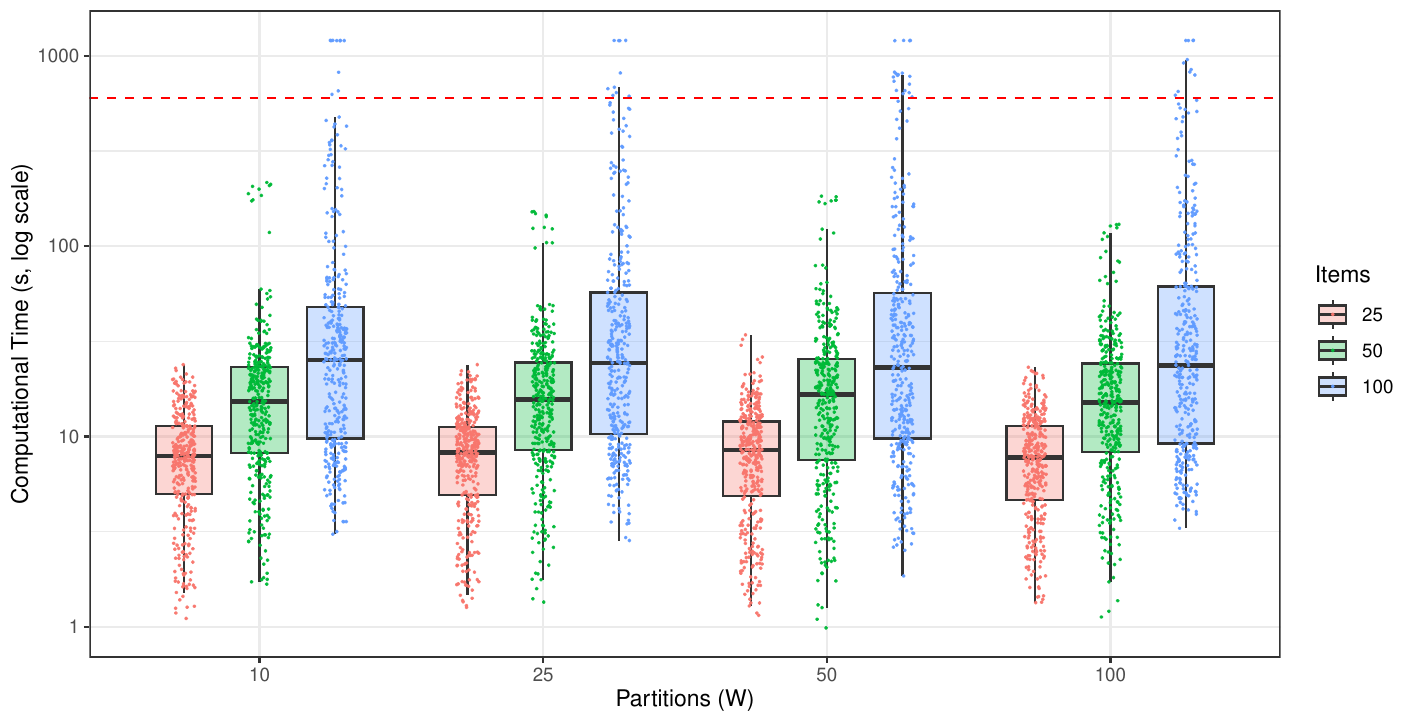}
\caption{Boxplot with jitter illustrating runtimes for PWL under multivariate normal distributed item weights and varying number of partitions ($W$); the dashed red line represents the time limit of 10 minutes.}
\label{fig:pwl_comp_time_segs_mvn}
\end{figure}
\begin{figure}
\centering
\includegraphics[width=0.9\textwidth, angle=0]{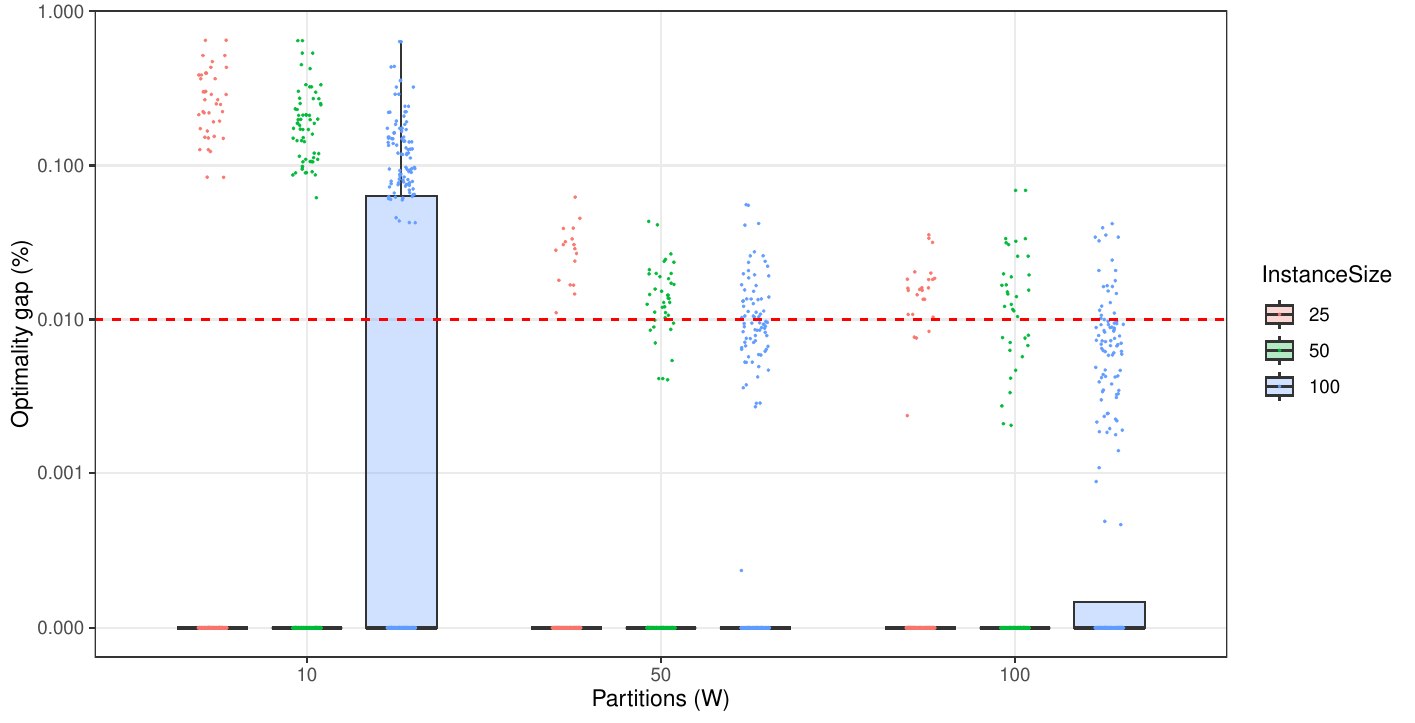}
\caption{Boxplot with jitter illustrating optimality gaps for PWL under multivariate normal distributed item weights and varying number of partitions ($W$); the dashed red line represents the target tolerance.}
\label{fig:pwl_rel_opt_gap_mvn}
\end{figure}

\section{Statistical certificates}
\label{sec:statistical_certificates_RH}

For the 180 instances considered under normally distributed item weights, in Figure \ref{fig:statistical_certificates_RH} we illustrate by means of boxplots with jitter optimality gaps for the receding horizon (RH) heuristics presented in Section \ref{sec:receding_horizon} computed against the optimal policy obtained via stochastic dynamic programming (RH) and those obtained by means of the statistical certificates produced by the approach in Section \ref{sec:rh_certificate} (RH STAT).

\begin{figure}
\centering
\includegraphics[width=0.75\textwidth, angle=0]{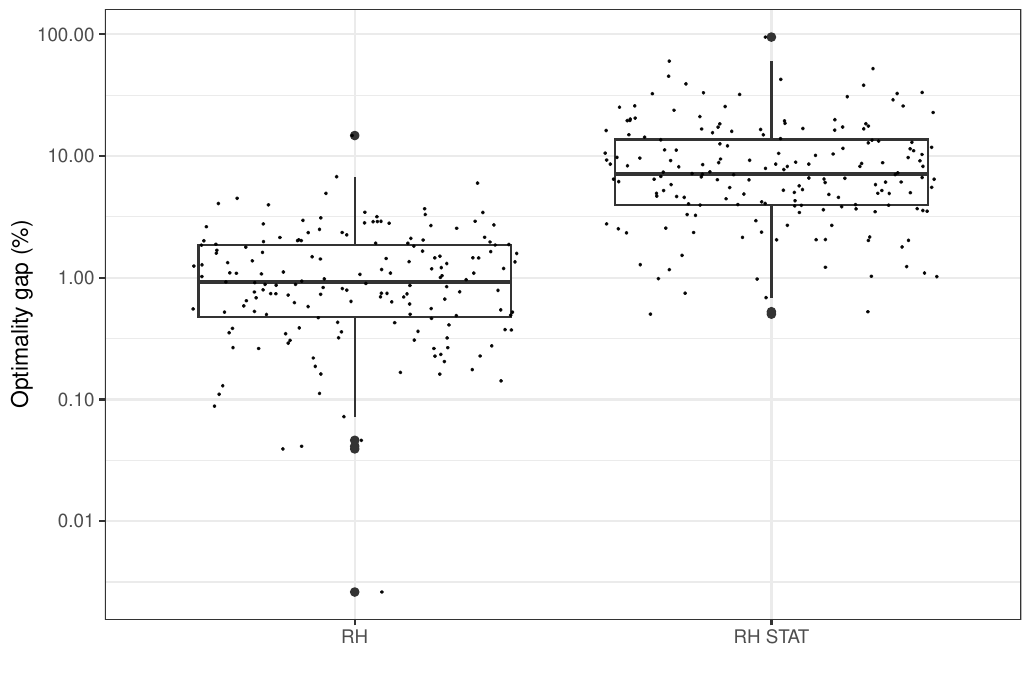}
\caption{Boxplot with jitter illustrating optimality gaps for the receding horizon heuristic computed against the optimal policy obtained via stochastic dynamic programming (RH) and those obtained by means of the statistical certificates produced by the approach in Section \ref{sec:rh_certificate} (RH STAT) for instances under normally distributed item weights.}
\label{fig:statistical_certificates_RH}
\end{figure}

\section*{CRediT author statement} {\bf Roberto Rossi}: Conceptualization, Methodology, Software, Data curation, Writing -- original draft;
{\bf Steven Prestwich}: Funding acquisition, Validation, Writing -- review and editing;
{\bf S. Armagan Tarim}: Software, Validation, Writing -- review and editing.


\bibliography{skp-bibliography}

\end{document}